\def\dfrac{\displaystyle\frac}
\def\dsum{\displaystyle\sum}
\newtheorem{prop}{Proposition}
\newtheorem{theo}[prop]{Theorem}
\newtheorem{lemm}[prop]{Lemma}
\newtheorem{coro}[prop]{Corollary}
\newtheorem{defi}[prop]{Definition}
\newcommand{\al}{\alpha}
\newcommand{\abc}[1]{\left( #1 \right)}%
\newcommand{\abz}[1]{\left[ #1 \right]}
\renewcommand{\leq}{\leqslant}
\renewcommand{\geq}{\geqslant}
\newcommand{\p}{\partial}
\newcommand{\la}{\kappa}
\numberwithin{equation}{section}
\begin{document}

\title{On the curvature estimates for Hessian equations}

\author{Changyu Ren}
\address{School of Mathematical Science\\
Jilin University\\ Changchun\\ China}
\email{rency@jlu.edu.cn}
\author{Zhizhang Wang}
\address{School of Mathematical Science\\ Fudan University \\ Shanghai, China}
\email{zzwang@fudan.edu.cn}
\thanks{Research of the  last author is supported  by an NSFC Grant No.11301087}
\begin{abstract}
The curvature estimates of $k$ curvature equations for general right hand side is a longstanding problem. In this paper, we totally solve the $n-1$ case and we also discuss some applications for our estimate. 
\end{abstract}
\maketitle

\section{introduction}

In this paper, we continue to  study the longstanding problem of global $C^2$ estimates for curvature equation in general type, 
\begin{eqnarray}\label{1.1}
\sigma_k(\kappa(X))=f(X,\nu(X)), \ \ \forall  X\in M,
\end{eqnarray}
where $\sigma_k$ is the $k$th elementary symmetric function, $\nu(X), \kappa(X)$ are the outer-normal and principal curvatures of hypersurface $M\subset \mathbb R^{n+1}$ at the position vector $X$ respectively. 

Equation \eqref{1.1} is the general form of some important type equations. For the cases $k=1,2$ and $n$, they are the mean curvature, scalar curvature and Gauss curvature type equation. We will mainly  discuss  the case of $k=n-1$ in this paper.

Now, let's give a brief review of some history related these equations. A lot of geometric problems fall into equation (\ref{1.1}) with special form of $f$. The  famous  Minkowski problem, namely, prescribed Gauss-Kronecker curvature on the outer normal, has been widely discussed in \cite{N, P1, P3, CY}. Alexandrov also posed the problem of prescribing general Weingarten curvature on outer normals, seeing  \cite{A2, gg}. The prescribing curvature measures problem in convex geometry also has been extensively studied in \cite{A1, P1, GLM, GLL}. In \cite{BK, TW, CNS5}, the prescribing mean curvature problem and Weingarten curvature problem also have been considered and obtained fruitful results. 

In many case, the main difficulty of the equation \eqref{1.1} is trying to obtain $C^2$ estimates. Hence, let's review some known results. For $k=1$, equation (\ref{1.1}) is quasilinear, $C^2$ estimate follows from the classical theory of quasilinear PDE.  The equation is of Monge-Amp\`ere type if $k=n$. $C^2$ estimate in this case for general $f(X, \nu)$ is due to Caffarelli-Nirenberg-Spruck \cite{CNS1}. When $f$ is independent of normal vector $\nu$, $C^2$ estimate has been proved by Caffralli-Nirenberg-Spruck \cite{CNS5}. If $f$ in (\ref{1.1}) depends only on $\nu$, $C^2$ estimate was proved in \cite{gg}. Ivochkina \cite{I1, I} considered the Dirichlet problem of equation (\ref{1.1}) on domains in $\mathbb R^n$, $C^2$ estimate was proved there under some extra conditions on the dependence of $f$ on $\nu$. $C^2$ estimate was also proved for equation of prescribing curvature measures problem in \cite{GLM, GLL}, where $f(X,\nu)= \langle X,\nu \rangle \tilde f(X)$. For $k=2$ and convex case, the $C^2$ estimate have been obtained in \cite{GRW}. Recently, the scalar curvature case is generalized and simplified in \cite{SX}.  For general equation \eqref{1.1}, the desired $C^2$ estimate should be in the Grading cone $\Gamma_k$. Following  \cite{CNS3}, the Garding's cone is defined by, 

\begin{defi}\label{k-convex} For a domain $\Omega\subset \mathbb R^n$, a function $v\in C^2(\Omega)$ is called $k$-convex if the eigenvalues $\kappa (x)=(\kappa_1(x), \cdots, \kappa_n(x))$ of the hessian $\nabla^2 v(x)$ is in $\Gamma_k$ for all $x\in \Omega$, where $\Gamma_k$ is the Garding's cone
\[\Gamma_k=\{\kappa \in \mathbb R^n \ | \quad \sigma_m(\kappa)>0, \quad  m=1,\cdots,k\}.\]

A $C^2$ regular hypersurface $M\subset \mathbb R^{n+1}$ is
$k$-convex if $\kappa(X)\in \Gamma_k$
for all $X\in M$.
 \end{defi}
In the present paper, for $n-1$ Hessian equation, we can obtain the $C^2$ estimate in $\Gamma_{n-1}$. Namely, totally solve the $C^2$ estimate for $n-1$ Hessian equation. In fact, the main result of this paper is, 

\begin{theo}\label{theo2}
Suppose $M\subset \mathbb R^{n+1}$ is a closed ${n-1}$-convex hypersurface satisfying curvature equation (\ref{1.1}) with $k=n-1$ for some positive function $f(X, \nu)\in C^{2}(\Gamma)$, where $\Gamma$ is an open neighborhood of unit normal bundle of $M$ in $\mathbb R^{n+1} \times \mathbb S^n$, then there is a constant $C$ depending only on $n, k$, $\|M\|_{C^1}$, $\inf f$ and $\|f\|_{C^2}$, such that
 \begin{equation}\label{Mc2}
 \max_{X\in M, i=1,\cdots, n} \kappa_i(X) \le C.\end{equation}
\end{theo}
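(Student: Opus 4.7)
The plan is to obtain an a priori upper bound on $\kappa_{\max}(X)=\max_i\kappa_i(X)$ by the maximum principle applied to an auxiliary function of the form
\[
W(X)=\log\kappa_{\max}(X)+\varphi\bigl(\langle X,\nu\rangle\bigr)+A\,|X|^2,
\]
where $\varphi$ is a carefully chosen convex function and $A>0$ a large constant to be fixed. Since $M$ is closed and $\|M\|_{C^1}$ is under control, $W$ attains its maximum at some $X_0\in M$ and it suffices to bound $\kappa_{\max}(X_0)$. A standard perturbation lets me assume $\kappa_1:=\kappa_{\max}$ is a simple eigenvalue at $X_0$, hence smooth in a neighborhood; in a principal frame diagonalising the second fundamental form at $X_0$ I have $F^{ij}=\sigma_{n-2}(\kappa|i)\delta_{ij}$ (positive in $\Gamma_{n-1}$ by Maclaurin) together with the critical conditions $\nabla_p W=0$ and $F^{ij}\nabla_i\nabla_j W\leq 0$.

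Differentiating $\sigma_{n-1}(\kappa)=f(X,\nu)$ twice and using Codazzi, Gauss and Ricci on $M$, I expand $F^{ij}\nabla_i\nabla_j\log\kappa_1$ at $X_0$ into (i) the concavity term $-F^{pq,rs}h_{pq,1}h_{rs,1}/\kappa_1\geq 0$; (ii) the cross terms $2\sum_{p\geq 2}(F^{pp}-F^{11})(h_{11p})^2/\bigl(\kappa_1(\kappa_1-\kappa_p)\bigr)$ produced by the second derivative of the simple eigenvalue $\kappa_1$; (iii) the Simons-type contribution $-\sum_i F^{ii}\kappa_i^2$; and (iv) $f$-derivatives controlled by $\|f\|_{C^2}$ and $\|M\|_{C^1}$. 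The auxiliary terms $\varphi+A|X|^2$ add a positive multiple of $A\sum_i F^{ii}$ together with gradient terms that, once $\nabla_p W=0$ is inserted, allow me to replace the mixed derivatives $h_{11p}$ by first-order expressions in $A$, $\kappa_1$ and $\|M\|_{C^1}$.

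The decisive algebraic step is the identity specific to $k=n-1$,
\[
F^{pp}-F^{11}=\sigma_{n-2}(\kappa|p)-\sigma_{n-2}(\kappa|1)=(\kappa_1-\kappa_p)\,\sigma_{n-3}(\kappa|1,p),
\]
which collapses the cross terms in (ii) into $\sum_{p\geq 2}\sigma_{n-3}(\kappa|1,p)(h_{11p})^2/\kappa_1$. Combined with the concavity in (i) and the lower bound $\sum_i F^{ii}\kappa_i^2\geq c(n)\,\kappa_1\,\sigma_{n-1}(\kappa)$ (a Newton--Maclaurin consequence in $\Gamma_{n-1}$), these positive quantities dominate the bad contributions once $A$ is fixed large enough in terms of $\inf f$ and $\|f\|_{C^2}$, producing an inequality of the form $\kappa_1(X_0)\leq C$, which is \eqref{Mc2}.

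The main obstacle I foresee is that the coefficients $\sigma_{n-3}(\kappa|1,p)$ need not be nonnegative throughout $\Gamma_{n-1}$: the cone permits a single negative eigenvalue $\kappa_n$, and that sign change can flip $\sigma_{n-3}(\kappa|1,n)$ both in sign and in magnitude, so the cross-term argument of the previous paragraph cannot be applied uniformly. I therefore expect the crux to be a quantitative dichotomy in $|\kappa_n|/\kappa_1$. When $\kappa_n$ is not too negative, the $\sigma_{n-3}(\kappa|1,p)$ stay nonnegative and the argument closes directly. In the opposite regime, the single summand $F^{nn}\kappa_n^2$ in (iii) is comparable to $\kappa_1^{n-2}\kappa_n^2$ and, together with the concavity in (i) (used this time to bound the lone bad cross term indexed by $p=n$), absorbs the deficit, while the indices $2\leq p<n$ are still handled by the good-sign case. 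Making this dichotomy uniform in the data and threading it through the large-constant choice of $A$ and $\varphi$ is where the technical weight of the $n-1$ case lies.
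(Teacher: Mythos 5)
There is a genuine gap, and you locate it yourself in your last paragraph: the whole difficulty of the $k=n-1$ estimate is precisely the positivity of the third-order terms when the cone $\Gamma_{n-1}$ admits a negative eigenvalue $\kappa_n$, and your proposal only gestures at a ``quantitative dichotomy in $|\kappa_n|/\kappa_1$'' without carrying it out. The factorization $F^{pp}-F^{11}=(\kappa_1-\kappa_p)\sigma_{n-3}(\kappa|1p)$ is indeed the special feature of $\sigma_{n-1}$, but turning it into a usable estimate is not a matter of absorbing a ``deficit'' into $F^{nn}\kappa_n^2$: at the maximum point the mixed derivatives $h_{11p}$ are only controlled through the critical-point relation, and the bad summand indexed by $p=n$ competes at the same order as the good terms, so the sign analysis has to be done exactly, not by a large-constant choice of $A$ and $\varphi$. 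In the paper this is the content of Proposition \ref{lea}, whose proof requires showing that the matrix with entries $\sigma_{n-3}(\kappa|ip)$ on the diagonal and $-\sigma_{n-3}(\kappa|ipq)$ off the diagonal is positive semidefinite on all of $\Gamma_{n-1}$ (via the explicit determinant identities \eqref{e1.11}--\eqref{e1.12} and the Schur product theorem), together with the case analysis of Lemmas \ref{le2}, \ref{le1}, \ref{le3} and the bound $-\kappa_n\le\kappa_1/(n-1)$ of Lemma \ref{le6}. The counterexample at the end of Section 2 ($\sigma_2$ in dimension $4$) shows that inequalities of this type fail outside the $k=n-1$ setting, so the positivity you need is delicate and genuinely must be proved; your sketch does not substitute for it.

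Two further points where your outline is too optimistic. First, the paper does not use $\log\kappa_{\max}$ but $\log\log P$ with $P=\sum_l e^{\kappa_l}$: the exponential weights produce the difference quotients $\frac{e^{\kappa_l}-e^{\kappa_i}}{\kappa_l-\kappa_i}$ (the terms $D_i$), and Lemmas \ref{leR} and \ref{le2}--\ref{le3} exploit exactly this weighting to beat the negative term $E_i$ coming from $(\log\log P)$; it is not clear your unweighted cross terms (ii) have enough room to play the same role. Second, your item (iv) — that the $f$-derivatives are ``controlled by $\|f\|_{C^2}$ and $\|M\|_{C^1}$'' — ignores that twice differentiating $f(X,\nu(X))$ produces the third-order term $\sum_l h_{ll k}\,d_\nu f(\partial_l)$, which grows with the gradient of curvature; in the paper it is eliminated only through the cancellation \eqref{e2.17}, which pairs the critical-point identity \eqref{e2.11} of the specific test function $\log\log P - N\log u$ with the once-differentiated equation \eqref{e2.14}. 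With your choice of $\varphi(\langle X,\nu\rangle)+A|X|^2$ you would need to verify an analogous cancellation rather than assert boundedness.
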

We use two steps to prove the above estimate.  The first key step is to obtain a better inequality which we have got in section 2. This is more explicit estimate than  the inequalities obtained in \cite{GRW}. Then using the test function discovered in \cite{GRW}, we obtain the global $C^2$ estimate.

We also have the similar estimate for Direchlet problem in $\mathbb{R}^n$.
\begin{coro} \label{Coro}
For the Direchlet problem of $\sigma_{n-1}$ equation defined in some bounded domain $\Omega\subset \mathbb{R}^n$, it is, 
\begin{eqnarray}\label{1.3}
\left\{\begin{matrix}\sigma_{n-1}[D^2u]&=&f(x,u,Du), & \text{ in } & \Omega\\  u&=&\varphi,   &\text{ on } &\partial \Omega\end{matrix}\right.
\end{eqnarray}
The global $C^2$ estimates can be obtained. It means that, we have some constants $C$ depending on $f$ and $\nabla u, u$ and the domain $\Omega$, such that,
$$\|u\|_{C^2(\bar{\Omega})}\leq C+\max_{\p\Omega}|\nabla^2 u|.$$
\end{coro}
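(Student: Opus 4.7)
The plan is to adapt the maximum principle argument used to prove Theorem \ref{theo2} to the Dirichlet setting, with the closed hypersurface replaced by the bounded domain $\bar{\Omega}$. The extra term $\max_{\partial\Omega}|\nabla^2 u|$ on the right-hand side of the claimed bound is exactly what allows one to handle the case when the relevant test function attains its maximum on the boundary, so no independent boundary $C^2$ estimate is required.

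The first step is to set up the Hessian analogue of the computation carried out in Section~2. Let $\lambda_1(x) \geq \cdots \geq \lambda_n(x)$ denote the eigenvalues of $D^2 u(x)$; inside $\Gamma_{n-1}$, bounding $\lambda_1$ from above is equivalent to bounding $|D^2 u|$, since $\sigma_1>0$ forces $\lambda_n \geq -(n-1)\lambda_1$. One then considers the same auxiliary function as in \cite{GRW}, schematically of the form
\[ \Phi(x,\xi) = \log u_{\xi\xi}(x) + h(|Du|^2) + k(u), \]
defined on $\bar{\Omega}\times \mathbb{S}^{n-1}$, and examines where its maximum is attained. If the maximum is attained on $\partial\Omega$, then for every $x\in\bar\Omega$ and every unit $\xi$ one concludes $\log u_{\xi\xi}(x) \leq \log \lambda_1(y_0) + C$ for some $y_0\in\partial\Omega$, which gives $\max_{\bar\Omega}\lambda_1 \leq C\,(1+\max_{\partial\Omega}|\nabla^2 u|)$, and the corollary follows.

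The remaining case is when the maximum of $\Phi$ is attained at an interior point $x_0$ in the direction $e_1$. The plan is then to diagonalize $D^2u(x_0)$ and apply the first- and second-order conditions for $\Phi$ at $x_0$, using the once- and twice-differentiated equation $\sigma_{n-1}(D^2 u) = f(x,u,Du)$ to eliminate third-derivative terms. At this stage the sharpened Section~2 inequality for $\sigma_{n-1}$ is invoked exactly as for the curvature operator, and it delivers an absolute upper bound on $\lambda_1(x_0)$, hence on $\max_{\bar\Omega}\lambda_1$. The dependence of $f$ on $(u,Du)$ rather than on $(X,\nu)$ contributes only lower-order terms in $\lambda_1$, which are absorbed into the constant $C$ because $u$ and $Du$ have already been controlled among the data.

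The main obstacle I anticipate is not conceptual but bookkeeping. The proof of Theorem \ref{theo2} is carried out in a normal frame on a hypersurface, and many identities it uses (Codazzi, Gauss, Simons-type commutators) must be translated into the flat graph setting. Since the ambient manifold is now Euclidean and no second fundamental form appears, the curvature correction terms vanish and the identities specialise to strictly simpler ones; no genuinely new analytic input should be required, and the argument reduces to a careful transcription of the interior calculation of Theorem \ref{theo2}, combined with the elementary boundary-versus-interior dichotomy described above.
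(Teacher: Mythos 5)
Your overall architecture -- interior maximum-principle computation as in Theorem \ref{theo2}, simplification of the commutator/curvature terms in the flat setting, absorption of the $(u,Du)$-dependence of $f$ into the constant, and the boundary-versus-interior dichotomy that produces the $\max_{\partial\Omega}|\nabla^2u|$ term -- is exactly the route the paper intends (it only says the proof is ``similar to Theorem \ref{theo2}''). However, there is a genuine gap at the central step: your choice of test function. You write $\Phi=\log u_{\xi\xi}+h(|Du|^2)+k(u)$ and call it ``the same auxiliary function as in \cite{GRW}''; it is not. The function used in \cite{GRW} and throughout this paper is $\log\log P$ (plus lower-order terms), with $P=\sum_l e^{\kappa_l}$, and this choice is not cosmetic. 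The whole point of Section 2 is that Proposition \ref{lea} must be fed the specific quantities $A_i,B_i,C_i,D_i,E_i$ that arise from differentiating $P$ twice via Lemma \ref{lemm D}: the terms $\sum_l e^{\kappa_l}u_{lli}^2$ and $\sum_{\alpha\neq\beta}\frac{e^{\kappa_\alpha}-e^{\kappa_\beta}}{\kappa_\alpha-\kappa_\beta}u_{\alpha\beta i}^2$, which through Lemma \ref{leR} and Lemmas \ref{le2}, \ref{le1}, \ref{le3} supply exactly the $(1+\varepsilon)\sum_{j\neq i}\sigma_{n-1}^{jj}u_{jji}^2$ and $K(\sigma_{n-1})_i^2$ terms appearing in \eqref{e1.1}. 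If you instead use the largest eigenvalue, the second-derivative formula produces terms of the type $\sum_{l>1}\frac{u_{1li}^2}{\lambda_1-\lambda_l}$ in the single direction $e_1$, and the positive terms required by Proposition \ref{lea} are simply not available; the inadequacy of the $\log\lambda_{\max}$ test function when $f$ depends on the gradient (or normal) is precisely what forced the introduction of $P$. So the inequality of Section 2 cannot be ``invoked exactly as for the curvature operator'' in your setup.

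A second, smaller inaccuracy: Proposition \ref{lea} does not by itself ``deliver an absolute upper bound on $\lambda_1(x_0)$.'' It only gives nonnegativity of a certain combination of third-order terms; the bound on $\lambda_1$ comes from then playing the good zeroth-order term $N\sigma_{n-1}^{ii}u_{ii}^2\geq Nc_0\lambda_1$ (produced by the $\frac{N}{2}|Du|^2$ or $-N\log u$ part of the test function, using $\sigma_{n-1}^{11}\lambda_1\geq c_0$) against the bad term $-\,C(1+K\lambda_1^2)/\log P$ coming from differentiating the equation twice, where the division by $\log P\sim\lambda_1$ -- again an artifact of the $\log\log P$ structure -- is what tames the quadratic bad term so that a large choice of $N$ closes the argument. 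With the corrected test function, the rest of your outline (flat-case transcription, boundary dichotomy, controlled dependence on $u,Du$) does give the stated estimate.
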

More reference about these type of estimates can be found in \cite{CW}, \cite{LRW} and therein.

Now, let's exhibit some applications of our estimate. The first application is that we can obtain the corresponding existence result for $n-1$-convex solutions of the prescribed $n-1$ curvature equation (\ref{1.1}). For the sake of the $C^0,C^1$ estimates, we need further barrier conditions on the prescribed function $f$ as considered in \cite{BK, TW, CNS5}. We denote $\rho(X)=|X|$.

\medskip

We assume that

\noindent {\it Condition} (1). There are two positive constant $r_1<1<r_2$ such that
\begin{equation}\label{4.1}
\left\{
\begin{matrix}
f(X,\frac{X}{|X|}) &\geq&  \dfrac{\sigma_k(1,\cdots, 1)}{r^k_1},\ \ \text{ for } |X|=r_1,\\
f(X,\frac{X}{|X|}) &\leq&  \dfrac{\sigma_k(1,\cdots, 1)}{r_2^k}, \ \ \text{  for  } |X|=r_2 .
\end{matrix}\right.
\end{equation}

\noindent {\it Condition} (2). For any fixed unit vector $\nu$,
\begin{eqnarray}\label{4.2}
\frac{\p }{\p \rho}(\rho^kf(X,\nu))\leq 0,\ \   \text{ where } |X|=\rho.
\end{eqnarray}

Using the above two condition, we have the following existence theorem. 
\begin{theo}\label{exist} Suppose $k=n-1$ and suppose positive function $f\in C^2(\bar B_{r_2}\setminus B_{r_1}\times \mathbb S^n)$ satisfies conditions (\ref{4.1}) and (\ref{4.2}), then equation (\ref{1.1}) has a unique $C^{3,\alpha}$ starshaped solution $M$ in $\{r_1\le |X|\le r_2\}$.
\end{theo}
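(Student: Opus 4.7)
The plan is to use the continuity method with the starshaped parametrization $M=\{r(x)\,x:x\in\mathbb{S}^n\}$, $r:\mathbb{S}^n\to(r_1,r_2)$; equation \eqref{1.1} then becomes a fully nonlinear elliptic equation on $\mathbb{S}^n$ for $r$. I would deform the right-hand side through a family $f_t$ ($t\in[0,1]$) joining a simple datum $f_0$, for which a round sphere of radius $\rho_0\in(r_1,r_2)$ is an explicit solution, to the prescribed $f$, arranging the homotopy so that Conditions (1) and (2) persist along the path. Openness is standard: at an $(n-1)$-convex starshaped solution the linearization is elliptic on $\mathbb{S}^n$, and Condition (2) forces the correct sign of the zeroth-order term so that the linearized operator is invertible on $C^{2,\alpha}(\mathbb{S}^n)$.

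Closedness reduces to uniform $C^{2,\alpha}$ estimates. For the $C^0$ bound $r_1\le r\le r_2$, one compares at interior extrema of $r$ with coordinate spheres: at a minimum of $r$ the principal curvatures of $M$ are dominated componentwise by those of the tangent sphere of radius $r_{\min}$, so $f(X,\nu)=\sigma_{n-1}(\kappa)\le \sigma_{n-1}(1,\ldots,1)/r_{\min}^{n-1}$, which together with Condition (1) forces $r_{\min}\ge r_1$; a symmetric argument at the maximum gives $r_{\max}\le r_2$. For the $C^1$ estimate I would apply the maximum principle to an auxiliary function such as $\log r$ or $|X|^2/2$ restricted to $M$, in the spirit of the classical starshaped estimates of Caffarelli-Nirenberg-Spruck, invoking Condition (2) at the critical point to absorb the unfavorable zeroth-order term and obtain a bound on $|\nabla_{\mathbb{S}^n}r|$. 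With $C^0$, $C^1$, and $(n-1)$-convexity in hand, Theorem \ref{theo2} supplies the crucial uniform $C^2$ bound. Evans-Krylov applied to the concave operator $\sigma_{n-1}^{1/(n-1)}$ on $\Gamma_{n-1}$ then yields $C^{2,\alpha}$, and Schauder theory bootstraps to $C^{3,\alpha}$. Uniqueness follows from a standard dilation/maximum-principle argument: given two solutions $M_1,M_2$, one considers the minimal $t\ge1$ with $tM_1\supseteq M_2$; at an interior contact point the curvature comparison $\kappa_i(tM_1)\ge\kappa_i(M_2)$, combined with the scaling identity $\sigma_{n-1}(\kappa(tM_1))=\sigma_{n-1}(\kappa(M_1))/t^{n-1}$ and Condition (2), forces $t=1$ and $M_1=M_2$.

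The main obstacle is preserving admissibility $\kappa(X)\in\Gamma_{n-1}$ along the continuity path: Theorem \ref{theo2} prevents a curvature from escaping to $+\infty$, but one must also rule out a curvature drifting toward $\partial\Gamma_{n-1}$ from below. This is controlled by combining the upper $C^2$ bound with the positivity $\sigma_{n-1}=f\ge\inf f>0$, which keeps the $(n-1)$-th elementary symmetric polynomial bounded away from zero; together these confine the curvature vector to a fixed compact subset of $\Gamma_{n-1}$, so ellipticity constants are uniform and the continuity method closes.
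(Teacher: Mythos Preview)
Your proposal is correct and follows the same strategy as the paper: continuity method along a homotopy to a radially symmetric datum (the paper uses the specific \cite{CNS5} family $f^t(X,\nu)=tf(X,\nu)+(1-t)C_n^{n-1}[|X|^{-(n-1)}+\varepsilon(|X|^{-(n-1)}-1)]$, with the unit sphere as the $t=0$ solution), $C^0$ and $C^1$ estimates from the barrier conditions (the paper simply defers to \cite{GRW}), the global $C^2$ estimate from Theorem~\ref{theo2}, and Evans--Krylov to close. One minor slip in your $C^0$ sketch: at an interior \emph{minimum} of $r$ the principal curvatures of $M$ dominate (not are dominated by) those of the tangent sphere $S_{r_{\min}}$, so your comparison inequalities are reversed; once this is corrected and Condition~(2) is invoked to propagate the barrier inequality off the spheres $|X|=r_1,r_2$, the argument goes through as you describe.
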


\bigskip
We also can apply our estimate to the prescribed curvature problem for spacelike graph hypersurface in Minkowski space. We assume the graph can be written by function $u$ which means  that $(x,u(x)), x\in \mathbb{R}^n$ is its position vector. Still, we suppose $\kappa_1,\cdots,\kappa_n$ be the principal curvature of these hypersurface. The principal curvature can be written by the derivative of the function $u$ which will be more clear in section 4. We have the following theorem.
\begin{theo}\label{theo5}
 Let $\Omega$ be some bounded domain in $\mathbb{R}^n$ with smooth boundary and $f\in C^2(\bar{\Omega}\times \mathbb{R}\times \mathbb{R}^n)$ is a positive function with $f_u\geq 0$. Let  $\varphi\in C^4(\bar{\Omega})$ be space like. Consider  the following Dirichlet problem, 
\begin{eqnarray}\label{1.66}
\left\{\begin{matrix}\sigma_{n-1}(\kappa_1,\cdots,\kappa_n)&=&f(x,u,Du), & \text{ in } & \Omega\\  u&=&\varphi,   &\text{ on } &\partial \Omega\end{matrix}\right..
\end{eqnarray}
If the above problem have some sub soultion, then it has a unique space like solution $u$ in $\Gamma_{n-1}$ belonging to $C^{3,\alpha}(\bar{\Omega})$ for any $\alpha\in (0,1)$.
\end{theo}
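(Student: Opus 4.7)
The plan is to invoke the continuity method. Let $\underline u$ denote the given subsolution and consider, for $t\in[0,1]$, the deformed Dirichlet problem
\[
\sigma_{n-1}(\kappa[u^t]) = t\,f(x,u^t,Du^t) + (1-t)\,\sigma_{n-1}(\kappa[\underline u]) \ \text{in }\Omega,\qquad u^t=\varphi \ \text{on }\partial\Omega,
\]
so that $u^0=\underline u$ solves the problem at $t=0$. Let $T\subset[0,1]$ be the set of parameters for which a spacelike $C^{3,\alpha}(\bar\Omega)$ solution $u^t$ with $\kappa[u^t]\in\Gamma_{n-1}$ exists. Openness of $T$ follows from the implicit function theorem: the linearized operator is elliptic by the concavity of $\sigma_{n-1}^{1/(n-1)}$ on $\Gamma_{n-1}$, and the hypothesis $f_u\ge 0$ fixes the sign of its zero-order coefficient so that the strong maximum principle applies. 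Closedness reduces to uniform a priori bounds in $C^{2,\alpha}(\bar\Omega)$.

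For $C^0$, the comparison principle against $\underline u$ gives $u^t\ge\underline u$, while a constant supersolution (or a standard convex barrier built from $f>0$ and $f_u\ge 0$) provides the matching upper bound. The $C^1$ estimate on $\partial\Omega$ is immediate: since $u^t=\underline u=\varphi$ there and $u^t\ge\underline u$ in $\Omega$, the outward normal derivative of $u^t$ is bounded by that of $\underline u$, while the tangential derivative equals $D\varphi$. For the interior gradient bound I would run a maximum-principle argument on the Lorentz factor $v=(1-|Du^t|^2)^{-1/2}$, differentiating the equation once and exploiting $f_u\ge 0$ together with the structural inequality for $\sigma_{n-1}$ to conclude $v\le C$; this is what keeps the graph uniformly spacelike along the path.

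The $C^2$ estimate is the heart of the matter. For the boundary $C^2$ estimate I would invoke the Guan-type construction using the subsolution as a barrier of the form $\underline u - u^t + Ad - Bd^2$, where $d$ is the distance to $\partial\Omega$: tangential-tangential second derivatives are read off from the boundary condition, mixed tangential-normal ones from differentiating the equation against this barrier, and the pure double normal derivative from the equation combined with a dichotomy on the remaining eigenvalues in the style of \cite{CNS5,TW}. For the interior $C^2$ bound I would apply the extension of Corollary \ref{Coro} to the Lorentzian setting: the pointwise inequality established in Section~2 and the test function imported from \cite{GRW} depend only on the algebra of the symmetric functions on $\Gamma_{n-1}$, not on the signature of the ambient metric, so after expressing $\kappa_i=\kappa_i(D^2u,Du)$ in Minkowski form, the extra lower-order curvature terms are controlled by the already established uniform bound on $v$ and $|Du^t|$.

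Once $\|u^t\|_{C^2(\bar\Omega)}$ is bounded uniformly in $t$ and $v\le C$, equation \eqref{1.66} is uniformly elliptic and concave in $D^2u$, so Evans-Krylov upgrades the estimate to $C^{2,\alpha}(\bar\Omega)$ and Schauder theory then yields $C^{3,\alpha}(\bar\Omega)$. This closes $T$, proving existence at $t=1$; uniqueness in $\Gamma_{n-1}$ is immediate from $f_u\ge 0$ via comparison. The step I expect to be the main obstacle is the interior $C^2$ estimate in the Lorentzian geometry, namely checking that the commutator identities and the trace inequality underpinning Theorem \ref{theo2} transfer cleanly to the Minkowski curvature formulae with all ambient-signature terms absorbed as lower-order perturbations; the boundary $C^2$ estimate without a convexity assumption beyond $(n-1)$-convexity is technical but manageable once the subsolution is in hand.
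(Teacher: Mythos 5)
Your proposal is correct and follows essentially the same route as the paper: continuity method, $C^0$ by comparison, the uniform spacelike gradient bound (which the paper simply quotes from Bayard \cite{Ba} rather than reproving via the Lorentz factor), boundary $C^2$ from the subsolution via the argument of \cite{B}, interior $C^2$ by transplanting the machinery of Sections 2 and 3 to the Minkowski graph using the test function $\log\log P+\frac{N}{2}|Du|^2$, and Evans--Krylov plus Schauder to close. The only imprecision is calling the sign-reversed Gauss-equation terms ``lower-order'': they are cubic in curvature, of the same order as the leading terms, but they are harmless because they either carry a favorable sign or are absorbed into the $CK\kappa_1^2/\log P$ error, which is then dominated by the positive term $N\sigma_{n-1}^{ii}h_{ii}^2(1-|Du|^2)\geq N(1-\theta^2)c_0\kappa_1$ produced by the gradient part of the test function together with the uniform spacelike bound, exactly as your sketch anticipates.
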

The prescribed curvature problem for spacelike graph hypersurface in Minkowski space is proposed by Bayard \cite{Ba0,Ba}. The scalar curvature case has been totally solved by Urban \cite{U}.  The above theorem solves $k=n-1$ case.  For the rest case $2<k<n-1$, it is still open. The difference with the problem in Euclidean space is that the curvature term has opposite sign. Hence, even for function $f$ does not depend on gradient term, these problem can not be successful solved as in Euclidean space, comparing \cite{CNSV}.  Hypersurfaces of prescribed curvature problem in Lorentzian manifolds also have been extensively studied by Bartnik-Simon \cite{BS}, Delano\"e \cite{De}, Gerhardt \cite{Ger1,Ger2} and Schn\"urer \cite{Sch}.

In this paper, we use standard notation.  We let $\kappa(A)$ be eigenvalues of the matrix $A=(a_{ij})$.   For equation $$F(A)=F(\kappa (A)),$$ we define
$$F^{pq}=\frac{\p F}{\p a_{pq}}, \text{ and  } F^{pq,rs}=\frac{\p^2 F}{\p a_{pq}\p a_{rs}}.$$ For a local orthonormal frame, if $A$ is diagonal at a point, then at this point, $$F^{pp}=\frac{\p f}{\p \kappa_p}=f_p, \text{ and }  F^{pp,qq}=\frac{\p^2 f}{\p \kappa_p\p \kappa_q}=f_{pq}.$$ The following facts regarding $\sigma_k$ will be used throughout this paper.

\noindent (i) $\sigma^{pp,pp}_k=0$ and $\sigma^{pp,qq}_k(\kappa)=\sigma_{k-2}(\kappa|pq)$;
 
\noindent (ii)  $\sigma^{pq,rs}_kh_{pql}h_{rsl}=\sigma_k^{pp,qq}h_{pql}^2-\sigma^{pp,qq}_kh_{ppl}h_{qql}$.

Here, the notation $\sigma_l(\kappa|ab\cdots)$ means $l$ symmetric function exclude the indices $a,b,\cdots$. 
Now, we give the following two Lemmas, which will be needed in our proof.
\par
\begin{lemm} \label{Guan}
Set $k>l$. For $\al=\dfrac{1}{k-l}$, we have,
\begin{eqnarray}\label{1.7}
&&-\frac{\sigma_k^{pp,qq}}{\sigma_k}u_{pph}u_{qqh}+\dfrac{\sigma_l^{pp,qq}}{\sigma_l}u_{pph}u_{qqh}\\
&\geq& \abc{\dfrac{(\sigma_k)_h}{\sigma_k}-\dfrac{(\sigma_l)_h}{\sigma_l}}
\abc{(\al-1)\dfrac{(\sigma_k)_h}{\sigma_k}-(\al+1)\dfrac{(\sigma_l)_h}{\sigma_l}}.\nonumber
\end{eqnarray}
further more, for sufficiently small $\delta>0$, we have,
\begin{eqnarray}\label{1.8}
&&-\sigma_k^{pp,qq}u_{pph}u_{qqh} +(1-\al+\dfrac{\al}{\delta})\dfrac{(\sigma_k)_h^2}{\sigma_k}\\
&\geq& \sigma_k(\al+1-\delta\al)
\abz{\dfrac{(\sigma_l)_h}{\sigma_l}}^2
-\dfrac{\sigma_k}{\sigma_l}\sigma_l^{pp,qq}u_{pph}u_{qqh}.\nonumber
\end{eqnarray}
\end{lemm}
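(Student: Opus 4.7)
The plan is to derive both (\ref{1.7}) and (\ref{1.8}) from the concavity of $\Psi := (\sigma_k/\sigma_l)^{\alpha}$, with $\alpha = 1/(k-l)$, on the Garding cone $\Gamma_k$---the classical Lin-Trudinger theorem. Since $\Psi > 0$ on $\Gamma_k$, this concavity is equivalent to
\begin{equation*}
(\log\Psi)^{ij,rs}\xi_{ij}\xi_{rs} \leq -\bigl((\log\Psi)^{ij}\xi_{ij}\bigr)^2
\end{equation*}
for every symmetric tensor $\xi$, and I would apply it at a point where $u_{ij}$ has been diagonalized, taking $\xi_{pq} = u_{pqh}$ for the chosen direction $h$.

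Setting $a := (\sigma_k)_h/\sigma_k$ and $b := (\sigma_l)_h/\sigma_l$, a direct computation at the diagonal point yields $(\log\Psi)^{ij}\xi_{ij} = \alpha(a-b)$, so the right-hand side of the concavity inequality is $-\alpha^2(a-b)^2$. For the left-hand side I would expand
\begin{equation*}
(\log\Psi)^{ij,rs} = \alpha\left[\frac{\sigma_k^{ij,rs}}{\sigma_k} - \frac{\sigma_k^{ij}\sigma_k^{rs}}{\sigma_k^2} - \frac{\sigma_l^{ij,rs}}{\sigma_l} + \frac{\sigma_l^{ij}\sigma_l^{rs}}{\sigma_l^2}\right]
\end{equation*}
and use identity (ii) to rewrite $\sigma_k^{ij,rs}u_{ijh}u_{rsh}$ (and the analogous $\sigma_l$ contraction) as a diagonal sum $\sigma_k^{pp,qq}u_{pph}u_{qqh}$ together with an off-diagonal residual in $u_{pqh}^2$ for $p\neq q$. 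Dividing through by $\alpha$ and rearranging then yields, up to that off-diagonal residual,
\begin{equation*}
-\frac{\sigma_k^{pp,qq}u_{pph}u_{qqh}}{\sigma_k} + \frac{\sigma_l^{pp,qq}u_{pph}u_{qqh}}{\sigma_l} \geq -a^2 + b^2 + \alpha(a-b)^2.
\end{equation*}
The elementary identity $-a^2 + b^2 + \alpha(a-b)^2 = (a-b)[(\alpha-1)a - (\alpha+1)b]$ then converts this into (\ref{1.7}).

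For (\ref{1.8}) I would multiply (\ref{1.7}) by $\sigma_k$, rewriting the right-hand side as $(\alpha-1)(\sigma_k)_h^2/\sigma_k + (\alpha+1)\sigma_k b^2 - 2\alpha(\sigma_k)_h b$, and split the cross term by Young's inequality
\begin{equation*}
-2\alpha(\sigma_k)_h b \geq -\frac{\alpha}{\delta}\cdot\frac{(\sigma_k)_h^2}{\sigma_k} - \alpha\delta\,\sigma_k b^2,
\end{equation*}
valid for any $\delta > 0$. Moving the $(\sigma_k)_h^2/\sigma_k$ term to the opposite side produces (\ref{1.8}) with exactly the stated constants $1-\alpha+\alpha/\delta$ and $\alpha+1-\alpha\delta$.

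The step I expect to require the most care is the treatment of the off-diagonal residual $\sum_{p\neq q}\bigl(\sigma_k^{pp,qq}/\sigma_k - \sigma_l^{pp,qq}/\sigma_l\bigr)u_{pqh}^2$ that the concavity expansion generates; its sign on $\Gamma_k$ is not handed to us by concavity alone but requires a Newton-MacLaurin-type comparison between $\sigma_{k-2}(\lambda|pq)/\sigma_k(\lambda)$ and $\sigma_{l-2}(\lambda|pq)/\sigma_l(\lambda)$. Once that comparison is in place, the residual can be dropped on the helpful side, and the remainder of the argument is purely algebraic, together with a single application of Young's inequality.
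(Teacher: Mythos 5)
Your overall strategy---concavity of $(\sigma_k/\sigma_l)^{1/(k-l)}$ written for the logarithm, the identity $-a^2+b^2+\al(a-b)^2=(a-b)\left[(\al-1)a-(\al+1)b\right]$, and, for \eqref{1.8}, multiplication by $\sigma_k$ plus one Young inequality---is exactly the route of the sources the paper cites for this lemma (\cite{GLL}, \cite{GRW}; the paper itself gives no proof), and your passage from \eqref{1.7} to \eqref{1.8} is correct with the stated constants. The gap is the step you flagged, and as set up it is fatal rather than delicate: contracting the matrix Hessian with the full tensor $\xi_{pq}=u_{pqh}$ produces, via \eqref{1.9}, the residual $2\sum_{p<q}\bigl(\sigma_{k-2}(\kappa|pq)/\sigma_k-\sigma_{l-2}(\kappa|pq)/\sigma_l\bigr)u_{pqh}^2$ on the left-hand side of your inequality, so discarding it would require $\sigma_{k-2}(\kappa|pq)\,\sigma_l\le\sigma_{l-2}(\kappa|pq)\,\sigma_k$ on $\Gamma_k$. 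The true comparison goes the other way: for $l\le 1$ the right-hand side vanishes while $\sigma_{k-2}(\kappa|pq)>0$ in $\Gamma_k$; for $k=3$, $l=2$, $\kappa=(1,1,1)$ one gets $1$ versus $1/3$; and in general $\sigma_{k-2}(\kappa|pq)\,\sigma_l\ge\sigma_{l-2}(\kappa|pq)\,\sigma_k$ is precisely the condition that makes the off-diagonal term in \eqref{1.9} nonpositive for $(\sigma_k/\sigma_l)^{\al}$, i.e.\ it is part of the reason the matrix function is concave at all. Hence your residual is nonnegative and sits on the wrong side; the full-tensor contraction only yields an inequality strictly weaker than \eqref{1.7}.

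The repair is immediate and is what the cited proofs do: \eqref{1.7} involves only the eigenvalue second derivatives $\sigma_k^{pp,qq}=\sigma_{k-2}(\kappa|pq)$ contracted against the diagonal vector $(u_{11h},\dots,u_{nnh})$, so invoke the concavity of $(\sigma_k/\sigma_l)^{\al}$ as a function of $\kappa\in\Gamma_k\subset\mathbb R^n$ (the classical Lin--Trudinger statement), or equivalently apply your matrix inequality only in the diagonal direction $\xi=\mathrm{diag}(u_{11h},\dots,u_{nnh})$; restricting to diagonal matrices loses nothing here and the off-diagonal residual never appears. Then $(\log\Psi)_{pq}\xi_p\xi_q\le-\al^2(a-b)^2$ gives \eqref{1.7} exactly as in your computation, and your derivation of \eqref{1.8} from \eqref{1.7} stands unchanged.
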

\par
The another one is,
\begin{lemm} \label{lemm D}
Denote $Sym(n)$ the set of all $n\times n$ symmetric matrices. Let $F$ be a $C^2$ symmetric function defined in some open subset $\Psi \subset Sym(n)$. At any diagonal matrix $A\in \Psi$ with distinct
eigenvalues, let $\ddot{F}(B,B)$ be the second derivative of $C^2$ symmetric function $F$
in direction $B \in Sym(n)$, then
\begin{eqnarray}
\label{1.9} \ddot{F}(B,B) =  \sum_{j,k=1}^n {\ddot{f}}^{jk}
B_{jj}B_{kk} + 2 \sum_{j < k} \frac{\dot{f}^j -
\dot{f}^k}{{\kappa}_j - {\kappa}_k} B_{jk}^2.
\end{eqnarray}
\end{lemm}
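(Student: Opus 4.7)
The plan is to reduce $\ddot{F}(B,B)$ to a second-order perturbation of the eigenvalues of $A$. Since $F$ is symmetric, I can write $F(A+tB)=f(\lambda_1(t),\ldots,\lambda_n(t))$, where $\lambda_j(t)$ denote the eigenvalues of $A+tB$ ordered so as to depend smoothly on $t$. The distinct-eigenvalue hypothesis will guarantee that for $|t|$ small each $\lambda_j(t)$ is a smooth function of $t$, so I will Taylor-expand $\lambda_j(t)=\kappa_j+t\mu_j+t^2\nu_j+O(t^3)$, whence $\dot\lambda_j=\mu_j$ and $\ddot\lambda_j=2\nu_j$. The chain rule then yields
$$\ddot F(B,B)=\sum_{j,k}\ddot f^{jk}\mu_j\mu_k+2\sum_j \dot f^j\,\nu_j,$$
and it remains to identify the $\mu_j$ and $\nu_j$.

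To do so, I would use the eigen-equation $(A+tB)v_j(t)=\lambda_j(t)v_j(t)$ with $v_j(0)=e_j$ (since $A$ is diagonal in the basis $\{e_k\}$) together with the Ansatz $v_j(t)=e_j+tw_j+O(t^2)$, normalized by $\langle w_j,e_j\rangle=0$. Matching the coefficient of $t$ and pairing against $e_j$ gives $\mu_j=B_{jj}$; pairing against $e_k$ for $k\ne j$, and using $(A-\kappa_j)e_k=(\kappa_k-\kappa_j)e_k$, yields $\langle w_j,e_k\rangle=B_{jk}/(\kappa_j-\kappa_k)$. Matching the coefficient of $t^2$ and pairing against $e_j$ will then produce
$$\nu_j=\langle Bw_j,e_j\rangle=\sum_{k\ne j}\frac{B_{jk}^2}{\kappa_j-\kappa_k}.$$

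Finally, I substitute back and symmetrize: the term $2\sum_j\dot f^j\nu_j=2\sum_j\sum_{k\ne j}\dot f^j B_{jk}^2/(\kappa_j-\kappa_k)$, and combining the $(j,k)$ and $(k,j)$ contributions for each unordered pair with $j<k$ collapses $\dot f^j/(\kappa_j-\kappa_k)+\dot f^k/(\kappa_k-\kappa_j)$ into $(\dot f^j-\dot f^k)/(\kappa_j-\kappa_k)$, producing the off-diagonal term $2\sum_{j<k}(\dot f^j-\dot f^k)B_{jk}^2/(\kappa_j-\kappa_k)$ in \eqref{1.9}. The only delicate point is justifying the smoothness and perturbation formulas for $\lambda_j(t)$, and this is precisely where the distinct-eigenvalue hypothesis is indispensable: it rules out eigenvalue crossings and keeps every denominator nonzero in a neighborhood of $t=0$, so the formal expansion above is a genuine $C^2$ Taylor expansion.
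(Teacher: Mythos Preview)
Your argument is correct. The paper does not supply its own proof of this lemma but simply refers to \cite{Ball} and \cite{CNS3}; the eigenvalue perturbation computation you carry out (first-order variation giving $\mu_j=B_{jj}$ and $\langle w_j,e_k\rangle=B_{jk}/(\kappa_j-\kappa_k)$, second-order giving $\nu_j=\sum_{k\ne j}B_{jk}^2/(\kappa_j-\kappa_k)$, then symmetrizing) is precisely the classical derivation appearing in those references. One very minor remark: you write the eigenvalue expansion with an $O(t^3)$ remainder, which is stronger than needed and stronger than the $C^2$ hypothesis on $F$ alone would give, but this is harmless since the eigenvalues of the linear pencil $A+tB$ are in fact real-analytic in $t$ near $t=0$ once the eigenvalues of $A$ are simple.
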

The proof of the first Lemma can be found in \cite{GLL} and \cite{GRW}. The second Lemma can be found in \cite{Ball} and \cite{CNS3}.

The organization of the paper is as follow. We give the key inequality in section 2. Theorem \ref{theo2} is proved in section 3. in section 4, we obtain some applications.

\section{An inequality}
\par
In this section, we will prove the following Proposition. It is a explicit inequality.  We consider the $\sigma_{n-1}$ equation in $n$ dimensional space.
\begin{prop} \label{lea}
For any  index  $i$ and $\varepsilon$, if $\kappa_i\geq \delta \kappa_1$, then we have,
\begin{equation}\label{e1.1}
\la_i[K(\sigma_{n-1})_i^2-\sigma_{n-1}^{pp,qq}u_{ppi}u_{qqi}]-\sigma^{ii}_{n-1}u_{iii}^2+(1+\varepsilon)\sum_{j\neq i}\sigma_{n-1}^{jj}u_{jji}^2\geq 0.
\end{equation}
for sufficient large $K$ depending on $\delta$ and $\varepsilon$.
\end{prop}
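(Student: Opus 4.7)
The plan is to regard the left-hand side of (\ref{e1.1}) as a quadratic form in the variables $a_p := u_{ppi}$, $p=1,\dots,n$, and to show it is positive semi-definite for $K$ sufficiently large. Write
\[
Q(a) \;=\; \kappa_i K\,L(a)^2 + Q_0(a), \qquad L(a) := (\sigma_{n-1})_i = \sum_p \sigma_{n-1}^{pp}\,a_p,
\]
with
\[
Q_0(a) \;:=\; -\kappa_i\,\sigma_{n-1}^{pp,qq}\,a_p a_q \;-\; \sigma_{n-1}^{ii}\,a_i^2 \;+\; (1+\varepsilon)\sum_{j\neq i}\sigma_{n-1}^{jj}\,a_j^2.
\]
Since $\kappa_i \geq \delta\kappa_1 > 0$, the first summand is a non-negative rank-one form whose coefficient can be made arbitrarily large. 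By a Schur-complement argument, establishing $Q \succeq 0$ for some $K=K(\delta,\varepsilon)$ is equivalent to showing that $Q_0$ is non-negative on the hyperplane $\{L=0\}$ (with enough strict positivity in the complementary directions to absorb the off-hyperplane cross terms once $K$ is large).

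On $\{L=0\}$ one eliminates
\[
u_{iii} \;=\; -\frac{1}{\sigma_{n-1}^{ii}}\sum_{j\neq i}\sigma_{n-1}^{jj}\,u_{jji},
\]
turning $Q_0\big|_{\{L=0\}}$ into a quadratic form $\widetilde Q$ in the $n-1$ variables $(u_{jji})_{j\neq i}$. Using the identities $\sigma_{n-1}^{pp} = \sigma_{n-2}(\kappa|p)$, $\sigma_{n-1}^{pp,qq} = \sigma_{n-3}(\kappa|pq)$ for $p\neq q$, and the divided-difference relation
\[
\sigma_{n-2}(\kappa|p) - \sigma_{n-2}(\kappa|q) \;=\; (\kappa_q - \kappa_p)\,\sigma_{n-3}(\kappa|pq),
\]
the cross terms in $\widetilde Q$ can be reorganized into differences amenable to completing squares. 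At the same time, Lemma \ref{Guan} applied with $k=n-1$ and a convenient $l$ (for instance $l=n-2$ so that $\alpha=1$, or $l=1$ so that $\sigma_l^{pp,qq}\equiv 0$) controls $-\sigma_{n-1}^{pp,qq}u_{ppi}u_{qqi}$ up to a multiple of $L^2/\sigma_{n-1}$, which vanishes on $\{L=0\}$ and contributes to fixing the Cauchy constants.

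The main obstacle is that the elimination of $u_{iii}$ injects new cross terms between the $(u_{jji})_{j\neq i}$ coming from $-\sigma_{n-1}^{ii} u_{iii}^2$ and from the $\sigma_{n-3}(\kappa|ij)u_{iii}u_{jji}$ contributions. Their coefficients are controlled precisely because the hypothesis $\kappa_i \geq \delta\kappa_1$ bounds the relative sizes of ratios such as $\sigma_{n-1}^{jj}/\sigma_{n-1}^{ii}$ and $\kappa_i\,\sigma_{n-3}(\kappa|pq)/\sigma_{n-1}^{jj}$ by quantities depending only on $n$ and $\delta$. The slack $\varepsilon$ in the positive diagonal summand then supplies the margin needed to absorb all cross contributions via Cauchy's inequality, yielding $\widetilde Q \succeq 0$; the constant $K$ in the conclusion depends on $\delta$ and $\varepsilon$ through these Cauchy constants and through the Schur-complement reduction.
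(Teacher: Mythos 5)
Your opening move (isolating the rank-one piece $\kappa_i K L(a)^2$ and completing the square in $u_{iii}$, i.e.\ a Schur-complement reduction to the hyperplane $\{L=0\}$) matches the first step of the paper's proof. But everything after that is where the actual difficulty sits, and your proposal does not close it. Two concrete problems. First, the quantitative claim you lean on is false: the hypothesis $\kappa_i\geq\delta\kappa_1$ does \emph{not} bound ratios such as $\sigma_{n-1}^{jj}/\sigma_{n-1}^{ii}$ by constants depending only on $n$ and $\delta$. For instance in $\Gamma_{n-1}$ with $n=3$, $\kappa=(t,t,\epsilon)$ and $i=1$, one has $\sigma_2^{11}=t\epsilon$ while $\sigma_2^{33}=t^2$, so the ratio is $t/\epsilon$, unbounded. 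Consequently the $\varepsilon$-slack in $(1+\varepsilon)\sum_{j\neq i}\sigma_{n-1}^{jj}u_{jji}^2$ cannot absorb the term $-\sigma_{n-1}^{ii}u_{iii}^2$ after eliminating $u_{iii}$ by a generic Cauchy--Schwarz argument: on $\{L=0\}$ that term equals $\bigl(\sum_{j\neq i}\sigma_{n-1}^{jj}u_{jji}\bigr)^2/\sigma_{n-1}^{ii}$, and the Cauchy constant $\sum_{j\neq i}\sigma_{n-1}^{jj}/\sigma_{n-1}^{ii}$ is uncontrolled. The positive (on the hyperplane) term $-\kappa_i\sigma_{n-1}^{pp,qq}u_{ppi}u_{qqi}$ must be used in precise combination with the others, not merely invoked via Lemma \ref{Guan}; throwing it in ``up to a multiple of $L^2/\sigma_{n-1}$'' loses exactly the cancellation that makes the statement true. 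Second, your scheme makes no essential use of $k=n-1$; the only ingredients are Lemma \ref{Guan}, divided differences, and Cauchy absorption, all of which are available for general $\sigma_k$. The paper's counterexample ($\sigma_2$ in $\mathbb{R}^4$, $\kappa=(2t+1/t,2t,0,-t)$) shows the inequality fails for other $k$ no matter how large $K$ is, so any proof along your lines would prove something false; the uniformity of $K$ in $\kappa$ also cannot come from a soft ``large $K$'' Schur-complement argument, since the reduced form degenerates as $\kappa$ leaves compact sets.

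What the paper actually does after the completing-the-square step is entirely algebraic and specific to $k=n-1$: using $\sigma_{n-1}(\kappa|ij)=0$ and $\sigma_{n-2}(\kappa|ipq)=0$ it derives exact identities (\eqref{e1.4}--\eqref{e1.8}) that collapse the reduced quadratic form, up to terms that are favorable for large $K$, into the two forms \eqref{e1.9} and \eqref{e1.10}, governed by the matrix $a_{pp}=\sigma_{n-3}(\kappa|ip)$, $a_{pq}=-\sigma_{n-3}(\kappa|ipq)$ and its entrywise square. Positive semi-definiteness of $(a_{pq}^2)$ is then deduced from that of $(a_{pq})$ via the Schur (Hadamard) product theorem, and positive semi-definiteness of $(a_{pq})$ in $\Gamma_{n-1}$ is proved by an explicit inductive computation of all principal minors, $D_m=\sigma_{n-2}^{m-1}(\kappa|1)\sigma_{n-(m+2)}(\kappa|1i_1\cdots i_m)$. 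These identities, the Hadamard-product step, and the determinant induction are the core of the proof, and none of them (or any substitute for them) appear in your proposal.
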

\begin{proof}
A directly calculation shows,
\begin{eqnarray}\label{e1.2}
&&\la_i[K(\sigma_{n-1})_i^2-\sigma_{n-1}^{pp,qq}u_{ppi}u_{qqi}]-\sigma_{n-1}^{ii}u_{iii}^2+(1+\varepsilon)\sum_{j\neq i}\sigma_{n-1}^{jj}u_{jji}^2\\
&=&\la_iK[\sum_{j\neq i}\sigma_{n-1}^{jj}u_{jji}]^2+2\la_iu_{iii}[\sum_{j\neq i}(K\sigma_{n-1}^{ii}\sigma_{n-1}^{jj}-\sigma_{n-1}^{ii,jj})u_{jji}]\nonumber\\&&+(\la_iK(\sigma_{n-1}^{ii})^2-\sigma_{n-1}^{ii})u_{iii}^2+(1+\varepsilon)\sum_{j\neq i}\sigma^{jj}_{n-1}u_{jji}^2-\la_i\sum_{p\neq i; q\neq i}\sigma_{n-1}^{pp,qq}u_{ppi}u_{qqi}\nonumber\\
&\geq&\la_iK[\sum_{j\neq i}\sigma_{n-1}^{jj}u_{jji}]^2-\frac{\la_i^2[\sum_{j\neq i}(K\sigma_{n-1}^{ii}\sigma_{n-1}^{jj}-\sigma_{n-1}^{ii,jj})u_{jji}]^2}{\la_iK(\sigma_{n-1}^{ii})^2-\sigma_{n-1}^{ii}}\nonumber\\&&+(1+\varepsilon)\sum_{j\neq i}\sigma^{jj}_{n-1}u_{jji}^2-\la_i\sum_{p\neq i; q\neq i}\sigma_{n-1}^{pp,qq}u_{ppi}u_{qqi}\nonumber\\
&=&\sum_{j\neq i}[\la_iK(\sigma_{n-1}^{jj})^2-\frac{\la_i^2(K\sigma_{n-1}^{ii}\sigma_{n-1}^{jj}-\sigma_{n-1}^{ii,jj})^2}{\la_iK(\sigma_{n-1}^{ii})^2-\sigma_{n-1}^{ii}}+(1+\varepsilon)\sigma_{n-1}^{jj}]u_{jji}^2\nonumber\\
&&+\sum_{p,q\neq i;p\neq q}[\la_iK\sigma_{n-1}^{pp}\sigma_{n-1}^{qq}-\frac{\la_i^2(K\sigma_{n-1}^{ii}\sigma_{n-1}^{pp}-\sigma_{n-1}^{ii,pp})(K\sigma_{n-1}^{ii}\sigma_{n-1}^{qq}-\sigma_{n-1}^{ii,qq})}{\la_iK(\sigma_{n-1}^{ii})^2-\sigma_{n-1}^{ii}}\nonumber\\&&-\la_i\sigma_{n-1}^{pp,qq}]u_{ppi}u_{qqi}\nonumber,
\end{eqnarray}
where, in the second inequality, we have used,
\begin{eqnarray}
&&\frac{\la_i^2[\sum_{j\neq i}(K\sigma_{n-1}^{ii}\sigma_{n-1}^{jj}-\sigma_{n-1}^{ii,jj})u_{jji}]^2}{\la_iK(\sigma_{n-1}^{ii})^2-\sigma_{n-1}^{ii}}+2\la_iu_{iii}[\sum_{j\neq i}(K\sigma_{n-1}^{ii}\sigma_{n-1}^{jj}-\sigma_{n-1}^{ii,jj})u_{jji}]\nonumber\\ &&+(\la_iK(\sigma_{n-1}^{ii})^2-\sigma_{n-1}^{ii})u_{iii}^2\geq 0. \nonumber
\end{eqnarray}
Note that  we have, 
$$K\kappa_i\sigma_{n-1}^{ii}-1\geq K\delta \kappa_1\sigma_{n-1}^{11}-1\geq 0,$$ for sufficient large $K$. Hence, we can omit the denominator in \eqref{e1.2}. Then,  we get,
\begin{eqnarray}\label{e1.3}
&&(\la_iK(\sigma_{n-1}^{ii})^2-\sigma_{n-1}^{ii})[\la_i[K(\sigma_{n-1})_i^2-\sigma_{n-1}^{pp,qq}u_{ppi}u_{qqi}]-\sigma_{n-1}^{ii}u_{iii}^2\\&&+(1+\varepsilon)\sum_{j\neq i}\sigma_{n-1}^{jj}u_{jji}^2]\nonumber\\
&\geq&\sum_{j\neq i}[\la_iK\sigma_{n-1}^{ii}\sigma_{n-1}^{jj}(-\sigma_{n-1}^{jj}+2\kappa_i\sigma_{n-1}^{ii,jj}+(1+\varepsilon)\sigma_{n-1}^{ii})-\la_i^2(\sigma_{n-1}^{ii,jj})^2\nonumber\\&&-(1+\varepsilon)\sigma_{n-1}^{ii}\sigma_{n-1}^{jj}]u_{jji}^2\nonumber\\
&&+\sum_{p,q\neq i;p\neq q}[\la_iK\sigma_{n-1}^{ii}(\la_i(\sigma_{n-1}^{pp}\sigma_{n-1}^{ii,qq}+\sigma_{n-1}^{qq}\sigma_{n-1}^{ii,pp}-\sigma_{n-1}^{ii}\sigma_{n-1}^{pp,qq})-\sigma_{n-1}^{pp}\sigma_{n-1}^{qq})\nonumber\\&&-\la_i^2\sigma_{n-1}^{ii,pp}\sigma_{n-1}^{ii,qq}+\la_i\sigma_{n-1}^{ii}\sigma_{n-1}^{pp,qq}]u_{ppi}u_{qqi}\nonumber.
\end{eqnarray}
We have several identities. At first, we have,
$$-\sigma_{n-1}^{jj}+2\la_i\sigma_{n-1}^{ii,jj}+\sigma_{n-1}^{ii}=(\la_i+\la_j)\sigma_{n-3}(\la|ij).$$
Hence, we get,
\begin{eqnarray}\label{e1.4}
&&\sigma_{n-1}^{jj}(\la_i+\la_j)\sigma_{n-3}(\la|ij)\\
&=&(\la_i\sigma_{n-2}(\la|j)+\sigma_{n-1}-\sigma_{n-1}(\la|j))\sigma_{n-3}(\la|ij)\nonumber\\
&=&(\la_i(\la_i\sigma_{n-3}(\la|ij)+\sigma_{n-2}(\la|ij))-\la_i\sigma_{n-2}(\la|ij)+\sigma_{n-1})\sigma_{n-3}(\la|ij)\nonumber\\
&=&\la_i^2(\sigma_{n-3}(\la|ij))^2+\sigma_{n-1}\sigma_{n-3}(\la|ij)\nonumber,
\end{eqnarray}
where we have used $\sigma_{n-1}(\kappa|ij)=0$.
We also have,
\begin{eqnarray}\label{e1.5}
&&\la_i(\sigma_{n-1}^{pp}\sigma_{n-1}^{ii,qq}+\sigma_{n-1}^{qq}\sigma_{n-1}^{ii,pp}-\sigma_{n-1}^{ii}\sigma_{n-1}^{pp,qq})-\sigma_{n-1}^{pp}\sigma_{n-1}^{qq}\\
&=&\la_i\sigma_{n-1}^{qq}\sigma_{n-1}^{ii,pp}-\la_i\sigma_{n-1}^{ii}\sigma_{n-1}^{pp,qq}-\sigma_{n-1}^{pp}\sigma_{n-2}(\la|iq)\nonumber\\
&=&\la_i\sigma_{n-2}(\la|pq)\sigma_{n-3}(\la|ip)-\la_i\sigma_{n-2}(\la|ip)\sigma_{n-3}(\la|pq)-\sigma_{n-2}(\la|p)\sigma_{n-2}(\la|iq)\nonumber\\
&=&(\la_i^2\sigma_{n-3}(\la|ipq)+\la_i\sigma_{n-2}(\la|ipq))(\la_q\sigma_{n-4}(\la|ipq)+\sigma_{n-3}(\la|ipq))\nonumber\\
&&-(\la_q\sigma_{n-3}(\la|ipq)+\sigma_{n-2}(\la|ipq))(\la_i^2\sigma_{n-4}(\la|ipq)+\la_i\sigma_{n-3}(\la|ipq))\nonumber\\
&&-\sigma_{n-2}(\la|p)(\la_p\sigma_{n-3}(\la|ipq)+\sigma_{n-2}(\la|ipq))\nonumber\\
&=&\la_i^2(\sigma_{n-3}(\la|ipq))^2-\la_i\la_q(\sigma_{n-3}(\la|ipq))^2-\la_p\sigma_{n-2}(\la|p)\sigma_{n-3}(\la|ipq)\nonumber\\
&=&\la_i^2(\sigma_{n-3}(\la|ipq))^2-\sigma_{n-1}\sigma_{n-3}(\la|ipq).\nonumber
\end{eqnarray}
Here we have used $\sigma_{n-2}(\la|ipq)=0$ and
$$\sigma_{n-1}=\la_p\sigma_{n-2}(\la|p)+\sigma_{n-1}(\la|p)=\la_p\sigma_{n-2}(\la|p)+\la_i\la_q\sigma_{n-3}(\la|ipq).$$
We also have,
\begin{eqnarray}\label{e1.6}
&&\sigma_{n-1}^{ii,pp}\sigma_{n-1}^{ii,qq}\\
&=&(\la_q\sigma_{n-4}(\la|ipq)+\sigma_{n-3}(\la|ipq))(\la_p\sigma_{n-4}(\la|ipq)+\sigma_{n-3}(\la|ipq))\nonumber\\
&=&(\sigma_{n-3}(\la|ipq))^2+[\la_p\la_q\sigma_{n-4}(\la|ipq)+(\la_p+\la_q)\sigma_{n-3}(\la|ipq)]\sigma_{n-4}(\la|ipq)\nonumber\\
&=&(\sigma_{n-3}(\la|ipq))^2+\sigma_{n-2}(\la|i)\sigma_{n-4}(\la|ipq),\nonumber
\end{eqnarray}
where we have used
$$\sigma_{n-2}(\la|i)=\la_p\sigma_{n-3}(\la|ip)+\sigma_{n-2}(\la|ip)=\la_p\la_q\sigma_{n-4}(\la|ipq)+(\la_p+\la_q)\sigma_{n-3}(\la|ipq).$$
We have,
\begin{eqnarray}\label{e1.7}
\sigma_{n-1}^{pp,qq}=\la_i\sigma_{n-4}(\la|ipq)+\sigma_{n-3}(\la|ipq)
\end{eqnarray}

Using the above two identities \eqref{e1.6} and \eqref{e1.7}, we get,
\begin{eqnarray}\label{e1.8}
&&-\la_i^2\sigma_{n-1}^{ii,pp}\sigma_{n-1}^{ii,qq}+\la_i\sigma_{n-1}^{ii}\sigma_{n-1}^{pp,qq}\\
&=&-\la_i^2(\sigma_{n-3}(\la|ipq))^2+\la_i\sigma_{n-1}^{ii}\sigma_{n-3}(\la|ipq).\nonumber
\end{eqnarray}
Using identities \eqref{e1.4}, \eqref{e1.5} and \eqref{e1.8},
\eqref{e1.3} becomes,
\begin{eqnarray*}
&&(\la_iK(\sigma_{n-1}^{ii})^2-\sigma_{n-1}^{ii})[\la_i[K(\sigma_{n-1})_i^2-\sigma_{n-1}^{pp,qq}u_{ppi}u_{qqi}]-\sigma_{n-1}^{ii}u_{iii}^2\\
&&+(1+\varepsilon)\sum_{j\neq i}\sigma_{n-1}^{jj}u_{jji}^2]\nonumber\\
&\geq&\sum_{j\neq i}[\la_iK\sigma_{n-1}^{ii}(\la_i^2(\sigma_{n-3}(\la|ij))^2+\sigma_{n-3}(\la|ij)\sigma_{n-1}+\varepsilon\sigma_{n-1}^{jj}\sigma_{n-1}^{ii})\nonumber\\&&-\la_i^2(\sigma_{n-1}^{ii,jj})^2-(1+\varepsilon)\sigma_{n-1}^{ii}\sigma_{n-1}^{jj}]u_{jji}^2\nonumber\\
&&+\sum_{p,q\neq i,p\neq q}[\la_iK\sigma_{n-1}^{ii}(\la_i^2(\sigma_{n-3}(\la|ipq))^2-\sigma_{n-1}\sigma_{n-3}(\la|ipq))\nonumber\\&&-\la_i^2(\sigma_{n-3}(\la|ipq))^2+\la_i\sigma_{n-1}^{ii}\sigma_{n-3}(\la|ipq)]u_{ppi}u_{qqi}\nonumber\\
&=&\sum_{j\neq i}[(\la_iK\sigma_{n-1}^{ii}-1)\la_i^2(\sigma_{n-3}(\la|ij))^2+\la_iK\sigma_{n-1}^{ii}\sigma_{n-3}(\la|ij)\sigma_{n-1}\nonumber\\&&+(\la_iK\sigma_{n-1}^{ii}\varepsilon-(1+\varepsilon))\sigma_{n-1}^{ii}\sigma_{n-1}^{jj}]u_{jji}^2\nonumber\\
&&+\sum_{p,q\neq i,p\neq q}[(\la_iK\sigma_{n-1}^{ii}-1)\la_i^2(\sigma_{n-3}(\la|ipq))^2\nonumber\\&&-\la_iK\sigma_{n-1}^{ii}(\sigma_{n-1}-\frac{1}{K})\sigma_{n-3}(\la|ipq)]u_{ppi}u_{qqi}\nonumber\\
&\geq&\sum_{j\neq i}[(\la_iK\sigma_{n-1}^{ii}-1)\la_i^2(\sigma_{n-3}(\la|ij))^2+\la_iK\sigma_{n-1}^{ii}\sigma_{n-3}(\la|ij)(\sigma_{n-1}-\frac{1}{K})]u_{jji}^2\nonumber\\
&&+\sum_{p,q\neq i,p\neq q}[(\la_iK\sigma_{n-1}^{ii}-1)\la_i^2(\sigma_{n-3}(\la|ipq))^2\nonumber\\&&-\la_iK\sigma_{n-1}^{ii}(\sigma_{n-1}-\frac{1}{K})\sigma_{n-3}(\la|ipq)]u_{ppi}u_{qqi}\nonumber.
\end{eqnarray*}
Here, the last inequality holds for sufficient large $K$.
Now, we only need to check whether the following two bilinear form are nonnegative. There are
\begin{eqnarray}\label{e1.9}
\sum_{j\neq i}(\sigma_{n-3}(\la|ij))^2u_{jji}^2+\sum_{p,q\neq i,p\neq q}(\sigma_{n-3}(\la|ipq))^2u_{ppi}u_{qqi},
\end{eqnarray}
and,
\begin{eqnarray}\label{e1.10}
\sum_{j\neq i}\sigma_{n-3}(\la|ij)u_{jji}^2-\sum_{p,q\neq i,p\neq q}\sigma_{n-3}(\la|ipq)u_{ppi}u_{qqi}.
\end{eqnarray}
Let's consider the corresponding two matrices.
Denote
$$a_{pq}=\left\{\begin{matrix}\sigma_{n-3}(\la|ip),& p=q\\ -\sigma_{n-3}(\la|ipq),& p\neq q \end{matrix}\right..$$
Now we need a elemental theorem in linear algebra. That is the Schur product theorem for Hadmard product.
\begin{theo}
The Hadmard product of two semipositive definite matrices is semipositive definite.
\end{theo}
Here, the meaning of the Hadmard product is that every entry of the
product of two matrices is the directly product of corresponding entries of two
matrices. For example, if matrices $B=(b_{ij}), C=(c_{ij})$, then the Hadamard product of matrices $B,C$ is the matrix $(b_{ij}c_{ij})$.  Thus, to prove the bilinear forms of \eqref{e1.9} and
\eqref{e1.10} are semi positive forms, we  only need to prove the
matrices $(a_{pq})$ and $(a_{pq}^2)$ are semi positive definite. By
Schur's product theorem, we only need to check that the matrix
$(a_{pq})$ is semi positive definite. It comes from the following
Lemma.
\end{proof}

\begin{lemm}
Suppose $2\leq i_1\leq i_2 \leq \cdots \leq  i_m\leq n$ are  $m$ ordered indices.  Then, $D_m(i_1,\cdots,i_m)$  the $k$-th principal  sub determinant of the matrix $(a_{pq})$ is
\begin{eqnarray}\label{e1.11}
D_m(i_1,\cdots,i_m)&=&\det\left[\begin{matrix}a_{i_1i_1}&a_{i_1i_2}&\cdots&a_{i_1i_m}\\ a_{i_2i_1}&a_{i_2i_2}&\cdots&a_{i_2i_m}\\ \cdots &&  \cdots \\ a_{i_mi_1}& a_{i_mi_2}&\cdots&a_{i_mi_m}\end{matrix}\right]\\
&=&\sigma_{n-2}^{m-1}(\la|1)\sigma_{n-(m+2)}(\la|1i_1\cdots i_m)\nonumber.
\end{eqnarray}
The another needed determinant is, for $k\neq m$,
\begin{eqnarray}\label{e1.12}
&&B_{m-1}(i_1,\cdots,i_m; i_{k})\\
&=&\det\left[\begin{matrix}a_{i_1i_1}&a_{i_1i_2}&\cdots&a_{i_1i_{k-1}}&a_{i_1i_{k+1}}&\cdots&a_{i_1i_m}\\ a_{i_2i_1}&a_{i_2i_2}&\cdots&a_{i_2i_{k-1}}&a_{i_2i_{k+1}}&\cdots&a_{i_2i_m}\\ \cdots &&  \cdots &&&\cdots\\ a_{i_{m-1}i_1}& a_{i_{m-1}i_2}&\cdots&a_{i_{m-1}i_{k-1}}&a_{i_{m-1}i_{k+1}}&\cdots&a_{i_{m-1}i_m}\end{matrix}\right]\nonumber\\
&=&(-1)^{m+k}[\sigma_{n-3}(\la|1i_ki_m)D_{m-2}(i_1\cdots i_{k-1}i_{k+1}\cdots i_{m-1})\nonumber\\&&+\sigma_{n-m}(\la|1i_2\cdots i_m)\sigma_{n-2}^{m-3}(\la|1)\sum_{l\neq k,m}\sigma_{n-3}(\la|1i_1 i_l)]\nonumber.
\end{eqnarray}
Hence, in $\Gamma_{n-1}$ cone, we have, $$D_{m-1}(2\cdots m)=\sigma_{n-2}^{m-2}(\la|1)\sigma_{n-(m+1)}(\la|12\cdots m)>0,$$ which implies the matrix $(a_{pq})$ is a nonnegative definite matrix.
\end{lemm}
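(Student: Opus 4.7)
The plan is to prove both identities \eqref{e1.11} and \eqref{e1.12} simultaneously by induction on $m$, after which the claimed nonnegativity is immediate. The central algebraic tool is the elementary reduction
\[
\sigma_k(\la|S)=\la_j\,\sigma_{k-1}(\la|S\cup\{j\})+\sigma_k(\la|S\cup\{j\}),\qquad j\notin S,
\]
together with the vanishing $\sigma_k(\la|S)=0$ whenever $k>n-|S|$. In our setting $i=1$ these give in particular $\sigma_{n-1}(\la|1i_1)=0$, $\sigma_{n-2}(\la|1i_1i_2)=0$ and $\sigma_{n-j}(\la|1i_1\cdots i_j)=0$, which are precisely the cancellations already exploited in \eqref{e1.4}--\eqref{e1.6}. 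The base cases $m=1,2$ are direct; $m=2$ is essentially the computation \eqref{e1.4}.

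For the inductive step of \eqref{e1.11}, I would expand $D_m(i_1,\dots,i_m)$ by cofactors along its last row:
\[
D_m(i_1,\dots,i_m)=\sum_{k=1}^{m}(-1)^{m+k}\,a_{i_mi_k}\,B_{m-1}(i_1,\dots,i_m;i_k),
\]
substitute $a_{i_mi_m}=\sigma_{n-3}(\la|1i_m)$ and $a_{i_mi_k}=-\sigma_{n-3}(\la|1i_mi_k)$ for $k<m$, and apply the inductive formula \eqref{e1.12} for each $B_{m-1}$. Using the reduction identity to peel $\sigma_{n-3}(\la|1i_m)$ down to terms involving $\sigma_{n-4}(\la|1i_mi_k)$ and $\sigma_{n-3}(\la|1i_mi_k)$, one reorganizes the sum so that the contributions from the "diagonal" cofactor and from the alternating off-diagonal cofactors pair up and factor through the common prefactor $\sigma_{n-2}^{m-1}(\la|1)$. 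To prove \eqref{e1.12} in the same induction, I would expand the rectangular minor along its last row, again applying the inductive $D_{m-2}$ formula to the "corner" term and producing the sum over $l\ne k,m$ from the remaining off-diagonal entries.

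The principal difficulty is bookkeeping: after substitution one has a long alternating sum of products of restricted elementary symmetric functions $\sigma_{n-j}(\la|1\cdots)$, and verifying that it collapses to the right factored form requires repeated, coordinated use of the reduction identity in multiple indices. The collapse mechanism is that each additional excluded index either lowers the subscript by one (producing a linear factor $\la_{i_p}$) or leaves $\sigma$ unchanged, while the vanishing identities eliminate every term in which too many of the $i_p$'s are excluded. This is the same trick that worked in \eqref{e1.5}, just iterated. After these cancellations only $\sigma_{n-2}^{m-1}(\la|1)\sigma_{n-(m+2)}(\la|1i_1\cdots i_m)$ survives, giving \eqref{e1.11}.

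Once the formulas are established, taking $(i_1,\dots,i_{m-1})=(2,\dots,m)$ yields
\[
D_{m-1}(2\cdots m)=\sigma_{n-2}^{m-2}(\la|1)\,\sigma_{n-(m+1)}(\la|12\cdots m).
\]
In the Garding cone $\Gamma_{n-1}$, $\la\in\Gamma_{n-1}$ forces $\sigma_{n-1}>0$ and $\sigma_{n-2}>0$, and by the Newton--MacLaurin/Garding restriction property the quantities $\sigma_j(\la|S)$ for the subscripts and cardinalities appearing here are nonnegative. Hence all leading principal minors of $(a_{pq})$ are nonnegative, so $(a_{pq})\succeq 0$; Schur's product theorem then yields $(a_{pq}^2)\succeq 0$, which is what is needed to conclude that the bilinear forms \eqref{e1.9} and \eqref{e1.10} are semi-positive, completing the proof of Proposition \ref{lea}.
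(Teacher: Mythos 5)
Your strategy coincides with the paper's: prove \eqref{e1.11} and \eqref{e1.12} together by induction on $m$, expand $D_m$ (respectively the rectangular minor $B_{m-1}$) along its last row, substitute the inductive formulas, and collapse the resulting alternating sums by iterated use of the reduction $\sigma_j(\la|S)=\la_p\,\sigma_{j-1}(\la|Sp)+\sigma_j(\la|Sp)$ together with the vanishing of the top-degree restricted functions; the base case and the concluding positivity statement are likewise handled as in the paper.

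The gap is that the collapse is asserted rather than carried out, and that collapse is the entire content of the induction step. Concretely, the paper's proof rests on explicit telescoping computations: the chain showing that the combination $\sigma_{n-3}(\la|1i_li_m)\,\sigma_{n-m}(\la|1i_1\cdots \hat i_l\cdots i_{m-1})+\sigma_{n-m}(\la|1i_2\cdots i_m)\sum_{k\neq l,m}\sigma_{n-3}(\la|1i_1i_k)$ reduces to $\sigma_{n-(m+1)}(\la|1i_1\cdots i_m)\,\sigma_{n-2}(\la|1)$, the analogous identity \eqref{a.20} needed for $B_{m-1}$ (where the last-row expansion must also be split into the cases $l<k$ and $l>k$ to track the signs $(-1)^{m+k}$), and the further identity $\sigma_{n-3}(\la|1i_m)-\sum_{l=1}^{m-1}\sigma_{n-3}(\la|1i_li_m)=\la_{i_1}\cdots\la_{i_{m-1}}\sigma_{n-(m+2)}(\la|1i_1\cdots i_m)$. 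Your proposal replaces all of this with ``after these cancellations only the claimed product survives,'' so as written it is a correct plan that would need these verifications to become a proof. Two smaller points: the $m=2$ base case for $D_2$ is the computation \eqref{e1.6}, not \eqref{e1.4}; and nonnegativity of the leading principal minors alone does not imply positive semidefiniteness (consider $\mathrm{diag}(0,-1)$), so you should either argue, as the paper does, that in $\Gamma_{n-1}$ the leading minors $D_{m-1}(2\cdots m)$ are strictly positive and invoke Sylvester's criterion, or observe that \eqref{e1.11} computes all principal minors and each is nonnegative on the cone.
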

\begin{proof}
We prove the above two formulas by induction.

For $m=2$,
$$B_1(i_1i_2;i_1)=a_{i_1i_2}=-\sigma_{n-3}(\la|1i_1i_2).$$ Also, we
have, by \eqref{e1.6},
\begin{eqnarray}
D_2(i_1i_2)&=&\sigma_{n-3}(\la|1i_1)\sigma_{n-3}(\la|1i_2)-\sigma_{n-3}^2(\la|1i_1i_2)\nonumber\\
&=&\sigma_{n-2}(\la|1)\sigma_{n-4}(\la|1i_1i_2).\nonumber
\end{eqnarray}
Hence, we assume that \eqref{e1.11} and \eqref{e1.12} both hold for
less than $m-1$. For $m$ case, we have,
\begin{eqnarray}
&&D_m(i_1\cdots, i_m)\\
&=&\sum_{l=1}^{m-1}(-1)^{m+l}a_{i_mi_l}B_{m-1}(i_1\cdots i_m;i_l)+a_{i_mi_m}D_{m-1}(i_1\cdots i_{m-1})\nonumber\\
&=&\sigma_{n-2}^{m-3}(\la|1)[\sigma_{n-2}(\la|1)\sigma_{n-3}(\la|1i_m) \sigma_{n-(m+1)}(\la|1i_1\cdots i_{m-1})\nonumber\\
&&-\sum_{l=1}^{m-1}\sigma^2_{n-3}(\la|1i_li_m) \sigma_{n-m}(\la|1i_1\cdots i_{l-1}i_{l+1}\cdots i_{m-1})\nonumber\\
&&-\sum_{l=1}^{m-1}\sigma_{n-3}(\la|1i_li_m) \sigma_{n-m}(\la|1i_2\cdots i_m)\sum_{k\neq l,m}\sigma_{n-3}(\la|1i_1i_k)]\nonumber.
\end{eqnarray}
We also have,
\begin{eqnarray}
&&\sigma_{n-3}(\la|1i_li_m) \sigma_{n-m}(\la|1i_1\cdots i_{l-1}i_{l+1}\cdots i_{m-1})\\&&
+ \sigma_{n-m}(\la|1i_2\cdots i_m)\sum_{k\neq l,m}\sigma_{n-3}(\la|1i_1i_k)\nonumber\\
&=&\sigma_{n-3}(\la|1i_li_m)(\la_{i_l} \sigma_{n-m-1}(\la|1i_1\cdots i_{m-1})+\sigma_{n-m}(\la|1i_1\cdots i_{m-1}))\nonumber\\&&
+ \sigma_{n-m}(\la|1i_1\cdots i_{m-1})\sum_{k\neq l}\sigma_{n-3}(\la|1i_ki_m)\nonumber\\
&=&\sigma_{n-2}(\la|1i_m) \sigma_{n-m-1}(\la|1i_1\cdots i_{m-1})
+ \sigma_{n-m}(\la|1i_1\cdots i_{m-1})\sum_{k}\sigma_{n-3}(\la|1i_ki_m)\nonumber\\
&=&\sigma_{n-3}(\la|1i_1i_m)(\la_{i_1} \sigma_{n-m-1}(\la|1i_1\cdots i_{m-1})+\sigma_{n-m}(\la|1i_1\cdots i_{m-1}))\nonumber\\&&
+ \sigma_{n-m}(\la|1i_1\cdots i_{m-1})\sum_{k\neq 1}\sigma_{n-3}(\la|1i_ki_m)\nonumber\\
&=&\sigma_{n-3}(\la|1i_1i_m) \sigma_{n-m}(\la|1i_2\cdots i_{m-1})\nonumber\\
&&+ \sigma_{n-m+1}(\la|1i_2\cdots i_{m-1})\sum_{k}\sigma_{n-4}(\la|1i_1i_ki_m)\nonumber\\
&=&\sigma_{n-4}(\la|1i_1i_2i_m)(\la_{i_2} \sigma_{n-m}(\la|1i_2\cdots i_{m-1})+\sigma_{n-m+1}(\la|1i_2\cdots i_{m-1}))\nonumber\\&&
+ \sigma_{n-m+1}(\la|1i_2\cdots i_{m-1})\sum_{k\neq 2}\sigma_{n-4}(\la|1i_1i_ki_m)\nonumber\\
&=&\sigma_{n-4}(\la|1i_1i_2i_m) \sigma_{n-m+1}(\la|1i_3\cdots i_{m-1})\nonumber\\
&&+ \sigma_{n-m+2}(\la|1i_3\cdots i_{m-1})\sum_{k}\sigma_{n-5}(\la|1i_1i_2i_ki_m)\nonumber\\
&=&\cdots =\sigma_{n-(m+1)}(\la|1i_1\cdots i_m)\sigma_{n-2}(\la|1).\nonumber
\end{eqnarray}
Hence, we have,
\begin{eqnarray}
&&D_m(i_1\cdots, i_m)\\
&=&\sigma_{n-2}^{m-2}(\la|1)[\sigma_{n-3}(\la|1i_m) \sigma_{n-(m+1)}(\la|1i_1\cdots i_{m-1})\nonumber\\
&&-\sigma_{n-m-1}(\la|1i_1\cdots i_m)\sum_{l=1}^{m-1}\sigma_{n-3}(\la|1i_li_m) \nonumber\\
&=&\sigma_{n-2}^{m-2}(\la|1)[\sigma_{n-3}(\la|1i_m)\la_{i_m} \sigma_{n-(m+2)}(\la|1i_1\cdots i_{m})\nonumber\\
&&+\sigma_{n-m-1}(\la|1i_1\cdots i_m)(\sigma_{n-3}(\la|1i_m)-\sum_{l=1}^{m-1}\sigma_{n-3}(\la|1i_li_m))]. \nonumber
\end{eqnarray}
It is clear that we have,
\begin{eqnarray}
&&\sigma_{n-3}(\la|1i_m)-\sum_{l=1}^{m-1}\sigma_{n-3}(\la|1i_li_m)\\
&=&\la_{i_1}\sigma_{n-4}(\la|1i_1i_m)-\sum_{l=2}^{m-1}\sigma_{n-3}(\la|1i_li_m)\nonumber\\
&=&\la_{i_1}[\sigma_{n-4}(\la|1i_1i_m)-\sum_{l=2}^{m-1}\sigma_{n-4}(\la|1i_1i_li_m)]\nonumber\\
&=&\la_{i_1}[\la_{i_2}\sigma_{n-5}(\la|1i_1i_2i_m)-\sum_{l=3}^{m-1}\sigma_{n-4}(\la|1i_1i_li_m)]\nonumber\\
&=&\la_{i_1}\la_{i_2}[\sigma_{n-5}(\la|1i_1i_2i_m)-\sum_{l=3}^{m-1}\sigma_{n-5}(\la|1i_1i_2i_li_m)]\nonumber\\
&=&\cdots =\la_{i_1}\la_{i_2}\cdots\la_{i_{m-1}}\sigma_{n-(m+2)}(\la|1i_1\cdots i_m)\nonumber.
\end{eqnarray}
Hence, we obtain,
\begin{eqnarray}
&&D_m(i_1\cdots, i_m)\nonumber\\
&=&\sigma_{n-2}^{m-2}(\la|1)[\sigma_{n-3}(\la|1i_m)\la_{i_m} \sigma_{n-(m+2)}(\la|1i_1\cdots i_{m})\nonumber\\
&&+\sigma_{n-m-1}(\la|1i_1\cdots i_m)\la_{i_1}\la_{i_2}\cdots\la_{i_{m-1}}\sigma_{n-(m+2)}(\la|1i_1\cdots i_m)] \nonumber\\
&=&\sigma_{n-2}^{m-2}(\la|1)[\sigma_{n-3}(\la|1i_m)\la_{i_m}+\sigma_{n-2}(\la|1 i_m)]\sigma_{n-(m+2)}(\la|1i_1\cdots i_m) \nonumber\\
&=&\sigma_{n-2}^{m-1}(\la|1)\sigma_{n-(m+2)}(\la|1i_1\cdots i_m) \nonumber.
\end{eqnarray}
For the formula \eqref{e1.12}, we can rewrite it to be,
\begin{eqnarray}\label{a.18}
&&B_{m-1}(i_1,\cdots,i_m; i_{k})\\
&=&(-1)^{m+k}[\sigma_{n-3}(\la|1i_ki_m)D_{m-2}(i_1\cdots i_{k-1}i_{k+1}\cdots i_{m-1})\nonumber\\&&+\sigma_{n-m}(\la|1i_1\cdots i_{m-1})\sigma_{n-2}^{m-3}(\la|1)\sum_{l\neq k}\sigma_{n-3}(\la|1i_l i_m)]\nonumber.
\end{eqnarray}
Now, let's expand its last row to prove it. In what following,  $\hat{i}$ means that index $i$ does not appear. We have,
\begin{eqnarray}\label{a.19}
&&B_{m-1}(i_1\cdots i_m;i_k)\\
&=&\sum_{l>k}(-1)^{m+l}\sigma_{n-3}(\la|1i_li_m)(-1)^{m-2-l}B_{m-2}(i_1\cdots i_{l-1}i_{l+1}\cdots i_{m-1} i_l;i_k)\nonumber\\
&&+\sum_{l<k}(-1)^{m+l}\sigma_{n-3}(\la|1i_li_m)(-1)^{m-1-l}B_{m-2}(i_1\cdots i_{l-1}i_{l+1}\cdots i_{m-1} i_l;i_k)\nonumber\\
&&+(-1)^{m+k}\sigma_{n-3}(\la|1i_ki_m)D_{m-2}(i_1\cdots i_{k-1}i_{k+1}\cdots i_{m-1})\nonumber\\
&=&(-1)^{m-1+k-1}\sum_{l>k}\sigma_{n-3}(\la|1i_li_m)[\sigma_{n-3}(\la|1i_ki_l)D_{m-3}(i_1\cdots \hat{i}_l\cdots \hat{i}_k\cdots i_{m-1})\nonumber\\
&&+\sigma_{n-(m-1)}(\la|1i_1\cdots\hat{i}_l\cdots i_{m-1})\sigma_{n-2}^{m-4}(\la|1)\sum_{a\neq k}\sigma_{n-3}(\la|1i_li_a)]\nonumber\\
&&+(-1)(-1)^{m-1+k}\sum_{l<k}\sigma_{n-3}(\la|1i_li_m)[\sigma_{n-3}(\la|1i_ki_l)D_{m-3}(i_1\cdots \hat{i}_k\cdots \hat{i}_l\cdots i_{m-1})\nonumber\\
&&+\sigma_{n-(m-1)}(\la|1i_1\cdots\hat{i}_l\cdots i_{m-1})\sigma_{n-2}^{m-4}(\la|1)\sum_{a\neq k}\sigma_{n-3}(\la|1i_li_a)]\nonumber\\
&&+(-1)^{m+k}\sigma_{n-3}(\la|1i_ki_m)D_{m-2}(i_1\cdots i_{k-1}i_{k+1}\cdots i_{m-1})\nonumber\\
&=&(-1)^{m+k}\{\sum_{l\neq k}\sigma_{n-3}(\la|1i_li_m)[\sigma_{n-3}(\la|1i_ki_l)D_{m-3}(i_1\cdots \hat{i}_l\cdots \hat{i}_k\cdots i_{m-1})\nonumber\\
&&+\sigma_{n-(m-1)}(\la|1i_1\cdots\hat{i}_l\cdots i_{m-1})\sigma_{n-2}^{m-4}(\la|1)\sum_{a\neq k}\sigma_{n-3}(\la|1i_li_a)]\nonumber\\
&&+\sigma_{n-3}(\la|1i_ki_m)D_{m-2}(i_1\cdots i_{k-1}i_{k+1}\cdots i_{m-1})\}\nonumber\\
&=&(-1)^{m+k}\{\sigma^{m-4}_{n-2}(\la|1)\sum_{l\neq k}\sigma_{n-3}(\la|1i_li_m)\nonumber\\
&&\times [\sigma_{n-3}(\la|1i_ki_l)\sigma_{n-(m-1)}(\la|1i_1\cdots \hat{i}_l\cdots \hat{i}_k\cdots i_{m-1})\nonumber\\
&&+\sigma_{n-(m-1)}(\la|1i_1\cdots\hat{i}_l\cdots i_{m-1})\sum_{a\neq k}\sigma_{n-3}(\la|1i_li_a)]\nonumber\\
&&+\sigma_{n-3}(\la|1i_ki_m)D_{m-2}(i_1\cdots i_{k-1}i_{k+1}\cdots i_{m-1})\}\nonumber.
\end{eqnarray}
We see that,
\begin{eqnarray}\label{a.20}
&&\sigma_{n-3}(\la|1i_ki_l)\sigma_{n-(m-1)}(\la|1i_1\cdots \hat{i}_l\cdots \hat{i}_k\cdots i_{m-1})\\
&&+\sigma_{n-(m-1)}(\la|1i_1\cdots\hat{i}_l\cdots i_{m-1})\sum_{a\neq k}\sigma_{n-3}(\la|1i_li_a)\nonumber\\
&=&\sigma_{n-2}(\la|1i_l)\sigma_{n-m}(\la|1i_1\cdots \hat{i}_l\cdots i_{m-1})\nonumber\\
&&+\sigma_{n-(m-1)}(\la|1i_1\cdots\hat{i}_l\cdots i_{m-1})\sum_{a}\sigma_{n-3}(\la|1i_li_a)\nonumber
\end{eqnarray}
\begin{eqnarray}
&=&\sigma_{n-3}(\la|1i_1i_l)[\la_{i_1}\sigma_{n-m}(\la|1i_1\cdots \hat{i}_l\cdots i_{m-1})+\sigma_{n-(m-1)}(\la|1i_1\cdots\hat{i}_l\cdots i_{m-1})]\nonumber\\
&&+\sigma_{n-(m-1)}(\la|1i_1\cdots\hat{i}_l\cdots i_{m-1})\sum_{a\neq 1}\sigma_{n-3}(\la|1i_li_a)\nonumber\\
&=&\sigma_{n-3}(\la|1i_1i_l)\sigma_{n-(m-1)}(\la|1i_2\cdots\hat{i}_l\cdots i_{m-1})\nonumber\\
&&+\sigma_{n-m+2}(\la|1i_2\cdots\hat{i}_l\cdots i_{m-1})\sum_{a}\sigma_{n-4}(\la|1i_1i_li_a)\nonumber\\
&=&\sigma_{n-4}(\la|1i_1i_2i_l)[\la_{i_2}\sigma_{n-m+1}(\la|1i_2\cdots \hat{i}_l\cdots i_{m-1})+\sigma_{n-m+2}(\la|1i_2\cdots\hat{i}_l\cdots i_{m-1})]\nonumber\\
&&+\sigma_{n-m+2}(\la|1i_2\cdots\hat{i}_l\cdots i_{m-1})\sum_{a\neq 2}\sigma_{n-4}(\la|1i_1i_li_a)\nonumber\\
&=&\sigma_{n-4}(\la|1i_1i_2i_l)\sigma_{n-m+2}(\la|1i_3\cdots\hat{i}_l\cdots i_{m-1})\nonumber\\
&&+\sigma_{n-m+3)}(\la|1i_3\cdots\hat{i}_l\cdots i_{m-1})\sum_{a}\sigma_{n-5}(\la|1i_1i_2i_li_a)\nonumber\\
&=&\cdots =\sigma_{n-m}(\la|1i_1\cdots i_{m-1})\sigma_{n-2}(\la|1)\nonumber.
\end{eqnarray}
Hence, combing \eqref{a.19} and \eqref{a.20},  we obtain \eqref{a.18}.
\end{proof}
At last, we give a counter example. This example says that our inequality holds only for $\sigma_{n-1}$. We consider the $\sigma_2$ in dimension $4$. Suppose $$\la_1=2t+\frac{1}{t}, \la_2=2t, \la_3=0,\text{ and } \la_4=-t.$$ Then, a directly calculate gives,
$$\sigma_2^{11}=t,\sigma_2^{22}=t+\frac{1}{t}, \sigma_2^{33}=3t+\frac{1}{t},\sigma_2^{44}=4t+\frac{1}{t}, \sigma_2=1.$$ Hence, $(\kappa_1,\kappa_2,\kappa_3,\kappa_4)$ is in $\Gamma_2$ cone for $t>0$. Let's calculate the determinate of the matrix defined by the bilinear form \eqref{e1.1} for $i=1$ case. It is equal to,
$$275K-311+\frac{12K-12}{t^4}+\frac{96K-100}{t^2}+(313K-427)t^2+(66K-216)t^4-72Kt^6.$$
Obviously, it is not nonnegative for sufficient large $t$.

\section{Global  curvature estimate }
\par
In this section, we consider the global $C^2$-estimates for the
curvature equation of $k=n-1$. At first, we need the following Lemma.

\begin{lemm} \label{leR}    
 For any constant
$0<\varepsilon_T<\dfrac{1}{2}$, there exist another constant
$0<\delta<\min\{\varepsilon_T/2,1/200\}$, which depends on $\varepsilon_T$, such that,
if $|\kappa_i|<\delta\kappa_1$,  we have,
\begin{align}\label{e2.2}
(1+\varepsilon_T)e^{\kappa_l}\sigma_{k-2}(\kappa|il)+(1+\varepsilon_T)\dfrac{e^{\kappa_l}-e^{\kappa_i}}{\kappa_l-\kappa_i}
\sigma_{k-1}(\kappa|l)\geq
\dfrac{e^{\kappa_l}}{\kappa_1}\sigma_{k-1}(\kappa|i),
\end{align}
for sufficient large  $\kappa_1$.\end{lemm}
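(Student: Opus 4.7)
The strategy is to reduce \eqref{e2.2} to an inequality involving only the two basic quantities $\sigma_{k-2}(\kappa|il)$ and $\sigma_{k-1}(\kappa|il)$, and then to perform a case analysis on the size of $\kappa_l$. Using the elementary recursions
\[\sigma_{k-1}(\kappa|i)=\sigma_{k-1}(\kappa|il)+\kappa_l\sigma_{k-2}(\kappa|il),\qquad \sigma_{k-1}(\kappa|l)=\sigma_{k-1}(\kappa|il)+\kappa_i\sigma_{k-2}(\kappa|il),\]
I would rewrite both sides of \eqref{e2.2} as a linear combination of $\sigma_{k-2}(\kappa|il)$ and $\sigma_{k-1}(\kappa|il)$ with coefficients depending only on $\kappa_i,\kappa_l,\kappa_1$ and $\varepsilon_T$. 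Since $k=n-1$ and $\kappa\in\Gamma_{n-1}$, at most one eigenvalue of $\kappa$ can fail to be positive, and the assumption $|\kappa_i|<\delta\kappa_1$ with $\kappa_1$ large excludes $\kappa_i$ from being that exceptional negative eigenvalue. This gives adequate sign control on the two basic quantities above as well as on $\sigma_{k-1}(\kappa|i)$ and $\sigma_{k-1}(\kappa|l)$.

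I would then split into three regimes for $\kappa_l$. First, when $|\kappa_l|$ is bounded by a constant $C(\varepsilon_T)$, the mean value theorem gives $(e^{\kappa_l}-e^{\kappa_i})/(\kappa_l-\kappa_i)=e^{\xi}$ for some $\xi$ between $\kappa_i$ and $\kappa_l$, which is bounded below by a positive constant; simultaneously $e^{\kappa_l}$ is bounded. The left-hand side of \eqref{e2.2} is therefore of a definite size, whereas the right-hand side carries the vanishing factor $e^{\kappa_l}/\kappa_1$, so the inequality holds once $\kappa_1$ is sufficiently large. Second, when $\kappa_l\geq C$ is large and positive, the middle term $(1+\varepsilon_T)(e^{\kappa_l}-e^{\kappa_i})/(\kappa_l-\kappa_i)\cdot\sigma_{k-1}(\kappa|l)$ is of order $e^{\kappa_l}\sigma_{k-1}(\kappa|l)/\kappa_l$, and since $\kappa_l\leq\kappa_1$ and $\sigma_{k-1}(\kappa|l)$ is comparable to $\sigma_{k-1}(\kappa|i)$ (both containing the large factor $\kappa_1$), the comparison against the right-hand side $e^{\kappa_l}\sigma_{k-1}(\kappa|i)/\kappa_1$ closes with the $(1+\varepsilon_T)$ slack.

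The main obstacle is the remaining case in which $\kappa_l$ is large and \emph{negative}. The cone hypothesis $\kappa\in\Gamma_{n-1}$ forces this to be the unique non-positive eigenvalue, which must be $\kappa_n$, and in particular $\kappa_l$ cannot appear as an index in the other $\sigma_{n-3}(\kappa|\cdots)$-type factors with its sign flipped uncontrollably. In this regime $e^{\kappa_l}$ is negligible and the difference quotient is only of order $1/|\kappa_l|$, so the positivity of the left-hand side rests on a delicate balance between these small terms and the equally small right-hand side factor $1/\kappa_1$. Choosing $\delta$ as a small multiple of $\varepsilon_T$ with the explicit upper bound $1/200$ is what allows the bookkeeping to close: this ensures that the $\kappa_i$-correction terms entering via the recursions above are uniformly dominated by the structural leading terms, and that the $\varepsilon_T$-slack on the left absorbs all remaining lower-order errors. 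The entire argument is elementary once the right case split and the right reduction to $\sigma_{k-2}(\kappa|il)$ and $\sigma_{k-1}(\kappa|il)$ are in place; the difficulty is purely in the careful choice of constants.
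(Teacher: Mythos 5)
Your overall strategy -- elementary recursions among $\sigma_{k-1}(\kappa|i)$, $\sigma_{k-1}(\kappa|l)$, $\sigma_{k-2}(\kappa|il)$ plus a case analysis on the relative sizes of $\kappa_i,\kappa_l$ -- is indeed the paper's, but the proposal rests on a false claim. You assert that $|\kappa_i|<\delta\kappa_1$ with $\kappa_1$ large excludes $\kappa_i$ from being the exceptional non-positive eigenvalue of $\Gamma_{n-1}$. That is not true: the unique possibly negative eigenvalue can have arbitrarily small modulus (e.g.\ $\kappa_n=-1$ while $\kappa_1$ is huge), so it can perfectly well play the role of $\kappa_i$. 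This is not peripheral: in the paper's proof the only place where the relation between $\delta$ and $\varepsilon_T$ enters is precisely to absorb a negative $\kappa_i$ of size up to $\delta\kappa_1$; one must check
\[
\varepsilon_T e^{\kappa_l}\sigma_{k-2}(\kappa|il)+(1+\varepsilon_T)\frac{e^{\kappa_l}-e^{\kappa_i}}{\kappa_l-\kappa_i}\,\kappa_i\,\sigma_{k-2}(\kappa|il)\geq 0,
\]
which for $\kappa_i<0$ reduces to $\varepsilon_T\kappa_l\geq-\kappa_i$ and, since that case only arises when $\kappa_l>\kappa_1/100$, forces $\delta\leq\varepsilon_T/100$. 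By excluding negative $\kappa_i$ you delete exactly the case that dictates how $\delta$ depends on $\varepsilon_T$, which is the whole content of the lemma.

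You also misplace the difficulty. The regime you call the main obstacle, $\kappa_l$ large and negative, is the easiest one: in $\Gamma_{n-1}$ at most one eigenvalue is negative, so then $\kappa_l\leq\kappa_i$, hence $\frac{e^{\kappa_l}-e^{\kappa_i}}{\kappa_l-\kappa_i}=e^{\kappa_l}\frac{e^{\kappa_i-\kappa_l}-1}{\kappa_i-\kappa_l}\geq e^{\kappa_l}$, and the identity $e^{\kappa_l}\sigma_{k-2}(\kappa|il)+\frac{e^{\kappa_l}-e^{\kappa_i}}{\kappa_l-\kappa_i}\sigma_{k-1}(\kappa|l)=e^{\kappa_i}\sigma_{k-2}(\kappa|il)+\frac{e^{\kappa_l}-e^{\kappa_i}}{\kappa_l-\kappa_i}\sigma_{k-1}(\kappa|i)$, together with $\sigma_{k-2}(\kappa|il)\geq0$ and $\sigma_{k-1}(\kappa|i)>0$ in $\Gamma_{n-1}$, gives \eqref{e2.2} as soon as $\kappa_1\geq1$, with no use of $\varepsilon_T$. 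The genuinely delicate regime is the one you wave through, namely $\kappa_l$ comparable to $\kappa_1$ with $\kappa_l-\kappa_i$ large: there the difference quotient is only of order $e^{\kappa_l}/(\kappa_l-\kappa_i)$, which need not dominate $e^{\kappa_l}/\kappa_1$, and one must peel off $e^{\kappa_l}\sigma_{k-2}(\kappa|il)$, treat $\sigma_{k-1}(\kappa|il)\leq0$ separately, and use the $\varepsilon_T$-slack via $\kappa_l-\kappa_i\geq\log\frac{1+\varepsilon_T}{\varepsilon_T-\delta}$ together with the negative-$\kappa_i$ inequality above; ``closes with the $(1+\varepsilon_T)$ slack'' is not an argument here. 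Finally, in your bounded-$|\kappa_l|$ case both estimates you invoke fail as stated: since $\kappa_i$ may be as negative as $-\delta\kappa_1$, the difference quotient can be of size $1/(\delta\kappa_1)$ rather than bounded below by a constant, and $\sigma_{k-1}(\kappa|i)=\kappa_1\sigma_{n-3}(\kappa|1i)+\sigma_{n-2}(\kappa|1i)$ grows like $\kappa_1$, so the right-hand side is comparable to, not negligible against, the left. Those cases do go through, but only via the identity above and quantitative lower bounds such as $\frac{e^{\kappa_l}-e^{\kappa_i}}{\kappa_l-\kappa_i}\geq\frac{1-e^{-1}}{(1/100+\delta)\kappa_1}e^{\kappa_l}$, which your sketch does not supply.
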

\begin{proof}  It is obvious that we have the following identity,
$$
\sigma_{k-1}(\kappa|l)=\sigma_{k-1}(\kappa|i)+(\kappa_i-\kappa_l)\sigma_{k-2}(\kappa|il).
$$
Multiplying $\dfrac{e^{\kappa_l}-e^{\kappa_i}}{\kappa_l-\kappa_i}$ in  both side of the above identity, we have,  
\begin{align}\label{e2.3}
e^{\kappa_l}\sigma_{k-2}(\kappa|il)+\dfrac{e^{\kappa_l}-e^{\kappa_i}}{\kappa_l-\kappa_i}
\sigma_{k-1}(\kappa|l)
=e^{\kappa_i}\sigma_{k-2}(\kappa|il)+\dfrac{e^{\kappa_l}-e^{\kappa_i}}{\kappa_l-\kappa_i}
\sigma_{k-1}(\kappa|i).
\end{align}
We divide into four cases to discuss.
\par
\noindent Case (i): $\kappa_l\leq \kappa_i$.\\
In this case, we have,
\begin{align*}
\dfrac{e^{\kappa_l}-e^{\kappa_i}}{\kappa_l-\kappa_i}
\sigma_{k-1}(\kappa|i)
=e^{\kappa_l}\dfrac{e^{\kappa_i-\kappa_l}-1}{\kappa_i-\kappa_l}
\sigma_{k-1}(\kappa|i)\geq e^{\kappa_l}\sigma_{k-1}(\kappa|i).
\end{align*}
Hence, by (\ref{e2.3}),  we get (\ref{e2.2})  for sufficient large $\kappa_1$. 
\par
\noindent Case (ii): $0<\kappa_l-\kappa_i\leq 1$.\\
In this case, obviously, we have $\kappa_i\geq\kappa_l-1$.  By the mean value theorem, there exists some constant
$\kappa_i<\xi<\kappa_l$. Then we have, 
\begin{align*}
\dfrac{e^{\kappa_l}-e^{\kappa_i}}{\kappa_l-\kappa_i}
\sigma_{k-1}(\kappa|i) =e^{\xi}\sigma_{k-1}(\kappa|i)\geq
e^{\kappa_l-1}\sigma_{k-1}(\kappa|i)
\geq\dfrac{e^{\kappa_l}}{\kappa_1}\sigma_{k-1}(\kappa|i),
\end{align*}
if $\kappa_1$ is sufficient large. By (\ref{e2.3}), we get
(\ref{e2.2}).
\par
\noindent Case(iii): $\kappa_l-\kappa_i>1$ and
$\dfrac{\kappa_l}{\kappa_1}\leq\dfrac{1}{100}$.\\
Using the condition $|\kappa_i|<\delta\kappa_1$, we have,
$$\kappa_l-\kappa_i\leq (\delta +\dfrac{1}{100})\kappa_1.$$ Then, we have,
\begin{align*}
\dfrac{e^{\kappa_l}-e^{\kappa_i}}{\kappa_l-\kappa_i}
\sigma_{k-1}(\kappa|i) \geq
e^{\kappa_l}\dfrac{1-e^{-1}}{\kappa_l-\kappa_i}
\sigma_{k-1}(\kappa|i) \geq
\dfrac{1-e^{-1}}{\frac{1}{100}+\delta}\dfrac{e^{\kappa_l}}{\kappa_1}
\sigma_{k-1}(\kappa|i).
\end{align*}
Now, choosing  $\delta$ sufficient small, we get,
$$\dfrac{1-e^{-1}}{\frac{1}{100}+\delta}\geq 1.$$ Then insert the above two inequalities into (\ref{e2.3}),  we
get (\ref{e2.2}).

\noindent Case (iv): $\kappa_l-\kappa_i>1$ and $\dfrac{\kappa_l}{\kappa_1}>\dfrac{1}{100}$.\\
In this case,  (\ref{e2.2}) can be rewritten, 
\begin{align}\label{e2.4}
&(1+\varepsilon_T)e^{\kappa_l}\sigma_{k-2}(\kappa|il)
+(1+\varepsilon_T)\dfrac{e^{\kappa_l}-e^{\kappa_i}}{\kappa_l-\kappa_i}
(\kappa_i\sigma_{k-2}(\kappa|il)+\sigma_{k-1}(\kappa|il))\\
\geq
&\dfrac{e^{\kappa_l}}{\kappa_1}(\kappa_l\sigma_{k-2}(\kappa|il)+\sigma_{k-1}(\kappa|il)).
\nonumber
\end{align}
If $\sigma_{k-1}(\kappa|il)\leq 0$,  (\ref{e2.4})  is clearly true. 
Thus, we can assume $\sigma_{k-1}(\kappa|il)> 0$. Obviously, we have, 
$$
e^{\kappa_l}\sigma_{k-2}(\kappa|il)\geq\dfrac{e^{\kappa_l}}{\kappa_1}\kappa_l\sigma_{k-2}(\kappa|il).
$$
To prove  (\ref{e2.4}), we only need to show the following two
inequalities, 
\begin{align}\label{e2.5}
(1+\varepsilon_T)\dfrac{e^{\kappa_l}-e^{\kappa_i}}{\kappa_l-\kappa_i}
\sigma_{k-1}(\kappa|il)\geq
\dfrac{e^{\kappa_l}}{\kappa_1}\sigma_{k-1}(\kappa|il),
\end{align}
and
\begin{align}\label{e2.6}
\varepsilon_T e^{\kappa_l}\sigma_{k-2}(\kappa|il)
+(1+\varepsilon_T)\dfrac{e^{\kappa_l}-e^{\kappa_i}}{\kappa_l-\kappa_i}
\kappa_i\sigma_{k-2}(\kappa|il)\geq 0.
\end{align}
To obtain (\ref{e2.5}), since $\sigma_{k-1}(\kappa|il)>0$, we can take off it in both sides. Hence, we only need,  
$$
\varepsilon_T\kappa_1e^{\kappa_l}+\kappa_ie^{\kappa_l}-(1+\varepsilon_T)\kappa_1e^{\kappa_i}\geq
0.
$$
Using $|\kappa_i| < \delta\kappa_1$, we need, 
$$
(\varepsilon_T-\delta)\kappa_1e^{\kappa_l}-(1+\varepsilon_T)\kappa_1e^{\kappa_i}\geq
0,$$ which implies the following requirement, 
$$\kappa_l-\kappa_i\geq\log(\dfrac{1+\varepsilon_T}{\varepsilon_T-\delta}).
$$
Since $|\kappa_i|<\delta\kappa_1,
\kappa_l>\dfrac{1}{100}\kappa_1$, the above requirement can be satisfied, if 
$$(\dfrac{1}{100}-\delta)\kappa_1\geq\log(\dfrac{1+\varepsilon_T}{\varepsilon_T-\delta}).$$
Hence, taking sufficient large $\kappa_1$, we obtain the above inequality.

In order to get (\ref{e2.6}), we need,
$$
\varepsilon_T+(1+\varepsilon_T)\dfrac{1-e^{\kappa_i-\kappa_l}}{\kappa_l-\kappa_i}
\kappa_i\geq 0.
$$
If $\kappa_i\geq 0$, it is clearly right. Hence, we only consider the case $\kappa_i<0$.
Then, we need to require,
$$
 (\kappa_l-\kappa_i)\varepsilon_T\geq -(1+\varepsilon_T)\kappa_i,$$ which implies, 
 $$\kappa_l\varepsilon_T\geq -\kappa_i.
$$
By our assumption, $\kappa_l\geq\dfrac{1}{100}\kappa_1,
|\kappa_i|\leq \delta \kappa_1$, we only need the constants $\delta$ and $\varepsilon_T$ to satisfy, 
$$\dfrac{\varepsilon_T}{100}\geq\delta. $$ We complete our proof.

\end{proof}

\par
Now we consider the global $C^2$-estimates for the curvature
equation \eqref{1.1}.
\par
Set $u(X)=<X, \nu(X)>$. By the assumption that $M$ is starshaped
with a $C^1$ bound, $u$ is bounded from below and above by two
positive constants. At every point in the hypersurface $M$, choose a
local coordinate frame $\{ \p/(\p x_1),\cdots,\p/(\p x_{n+1})\}$ in
$\mathbb{R}^n$ such that the first $n$ vectors are the local
coordinates of the hypersurface and the last one is the unit outer
normal vector.  Denote $\nu$ to be the outer normal vector. We let
$h_{ij}$ and $u$ be the second fundamental form and the support
function of the hypersurface $M$ respectively.  The following
geometric formulas are well known (e.g., \cite{GLL}).

\begin{equation}
h_{ij}=\langle\partial_iX,\partial_j\nu\rangle,
\end{equation}
and
\begin{equation}
\begin{array}{rll}
X_{ij}=& -h_{ij}\nu\quad {\rm (Gauss\ formula)}\\
(\nu)_i=&h_{ij}\partial_j\quad {\rm (Weigarten\ equation)}\\
h_{ijk}=& h_{ikj}\quad {\rm (Codazzi\ formula)}\\
R_{ijkl}=&h_{ik}h_{jl}-h_{il}h_{jk}\quad {\rm (Gauss\ equation)},\\
\end{array}
\end{equation}
where $R_{ijkl}$ is the $(4,0)$-Riemannian curvature tensor. We also
have
\begin{equation}
\begin{array}{rll}
h_{ijkl}=& h_{ijlk}+h_{mj}R_{imlk}+h_{im}R_{jmlk}\\
=& h_{klij}+(h_{mj}h_{il}-h_{ml}h_{ij})h_{mk}+(h_{mj}h_{kl}-h_{ml}h_{kj})h_{mi}.\\
\end{array}
\end{equation}

\par
For function $u$, we consider the following test function which appear firstly in \cite{GRW},
$$
\phi=\log\log P-N\ln u.
$$
Here the function $P$ is defined by $$P=\dsum_le^{\kappa_l}. $$

We may assume that the maximum of $\phi$ is achieved  at some point
$X_0\in M$. After rotating the coordinates, we may assume the matrix
$(h_{ij})$ is diagonal at the point, and we can further  assume that
$h_{11}\geq h_{22}\cdots\geq h_{nn}$. Denote $\kappa_i=h_{ii}$.
\par
Differentiate the function twice  at  $X_0$, we have,
\begin{equation}\label{e2.11}
\phi_i=\dfrac{P_i}{P\log P}- N\frac{h_{ii}\langle
X,\p_i\rangle}{u}=0,
\end{equation}

and,
\begin{eqnarray*}
&&\phi_{ii}\\
&=& \frac{P_{ii}}{P\log P}-\frac{P_i^2}{P^2\log P}-\frac{P_i^2}{(P\log P)^2}- \frac{N}{u}\sum_lh_{il,i}\langle \p_l,X \rangle-\frac{N h_{ii}}{u}\nonumber\\&&+Nh_{ii}^2+N\frac{h_{ii}^2\langle X,\p_i\rangle^2}{u^2}\nonumber\\
&=&\frac{1}{P\log P}[\sum_le^{\kappa_l}h_{llii}+\sum_le^{\kappa_l}h_{lli}^2+\sum_{\alpha\neq \beta}\frac{e^{\kappa_{\alpha}}-e^{\kappa_{\beta}}}{\kappa_{\alpha}-\kappa_{\beta}}h_{\alpha\beta i}^2-(\frac{1}{P}+\frac{1}{P\log P})P_i^2]\nonumber\\
&&- \frac{N\sum_lh_{iil}\langle \p_l,X \rangle}{u}-\frac{ Nh_{ii}}{u}+Nh_{ii}^2+N\frac{h_{ii}^2\langle X,\p_i\rangle^2}{u^2}\nonumber\\
&=&\frac{1}{P\log P}[\sum_le^{\kappa_l}h_{ii,ll}+\sum_le^{\kappa_l}(h_{il}^2-h_{ii}h_{ll})h_{ii}+\sum_le^{\kappa_l}(h_{ii}h_{ll}-h_{il}^2)h_{ll}\nonumber\\
&&+\sum_le^{\kappa_l}h_{lli}^2+\sum_{\alpha\neq \beta}\frac{e^{\kappa_{\alpha}}-e^{\kappa_{\beta}}}{\kappa_{\alpha}-\kappa_{\beta}}h_{\alpha\beta i}^2-(\frac{1}{P}+\frac{1}{P\log P})P_i^2]\nonumber\\
&&- \frac{N\sum_lh_{iil}\langle \p_l,X \rangle}{u}-\frac{
Nh_{ii}}{u}+Nh_{ii}^2+N\frac{h_{ii}^2\langle
X,\p_i\rangle^2}{u^2}\nonumber
\end{eqnarray*}
Contract with $\sigma_{n-1}^{ii}$,
\begin{eqnarray}\label{e2.13}
&&\sigma_{n-1}^{ii}\phi_{ii}\\
&=&\frac{1}{P\log
P}[\sum_le^{\kappa_l}\sigma_{n-1}^{ii}h_{ii,ll}+(n-1)f\sum_le^{\kappa_l}h_{ll}^2-\sigma_{n-1}^{ii}h_{ii}^2\sum_le^{\kappa_l}h_{ll}
\nonumber\\
&&+\sum_l\sigma_{n-1}^{ii}e^{\kappa_l}h_{lli}^2+\sum_{\alpha\neq \beta}\sigma_{n-1}^{ii}\frac{e^{\kappa_{\alpha}}-e^{\kappa_{\beta}}}{\kappa_{\alpha}-\kappa_{\beta}}h_{\alpha\beta i}^2-(\frac{1}{P}+\frac{1}{P\log P})\sigma_{n-1}^{ii}P_i^2]\nonumber\\
&&- \frac{N\sum_l\sigma_{n-1}^{ii}h_{iil}\langle \p_l,X
\rangle}{u}-\frac{
N(n-1)f}{u}+N\sigma_{n-1}^{ii}h_{ii}^2+N\frac{\sigma_{n-1}^{ii}h_{ii}^2\langle
X,\p_i\rangle^2}{u^2}.\nonumber
\end{eqnarray}\par
At $x_0$, differentiate equation (\ref{1.1}) twice, we have,
\begin{eqnarray}\label{e2.14}
\sigma_{n-1}^{ii}h_{iik}&=&d_Xf(\p_k)+h_{kk}d_{\nu}f(\p_k),
\end{eqnarray}
and
\begin{eqnarray}\label{e2.15}
\sigma_{n-1}^{ii}h_{iikk}+\sigma_{n-1}^{pq,rs}h_{pqk}h_{rsk}&\geq&
-C-Ch_{11}^2+\sum_lh_{lkk}d_{\nu}f(\p_l),
\end{eqnarray}
where $C$ is some constant under control.

Insert  (\ref{e2.15}) into (\ref{e2.13}),
\begin{eqnarray}
&&\sigma_{n-1}^{ii}\phi_{ii}\label{e2.16}\\
&\geq &\frac{1}{P\log P}[\sum_le^{\kappa_l}(-C-Ch_{11}^2-\sigma_{n-1}^{pq,rs}h_{pql}h_{rsl})+\sum_le^{\kappa_k}h_{lkk}d_{\nu}f(\p_l)\nonumber\\
&&+(n-1)f\sum_le^{\kappa_l}h_{ll}^2-\sigma_{n-1}^{ii}h_{ii}^2\sum_le^{\kappa_l}h_{ll}+\sum_l\sigma_{n-1}^{ii}e^{\kappa_l}h_{lli}^2\nonumber\\
&&+\sum_{\alpha\neq \beta}\sigma_{n-1}^{ii}\frac{e^{\kappa_{\alpha}}-e^{\kappa_{\beta}}}{\kappa_{\alpha}-\kappa_{\beta}}h_{\alpha\beta i}^2-(\frac{1}{P}+\frac{1}{P\log P})\sigma_{n-1}^{ii}P_i^2]\nonumber\\
&&- \frac{N\sum_l\sigma_{n-1}^{ii}h_{iil}\langle \p_l,X
\rangle}{u}-\dfrac{N(n-1)f}{u}+N\sigma_{n-1}^{ii}h_{ii}^2+N\frac{\sigma_{n-1}^{ii}h_{ii}^2\langle
X,\p_i\rangle^2}{u^2}.\nonumber
\end{eqnarray}
By (\ref{e2.11}) and (\ref{e2.14}), we have, 
\begin{eqnarray}\label{e2.17}
&&\sum_k d_{\nu}f(\p_k)\frac{\sum_le^{\kappa_l}h_{llk}}{P\log P}-\frac{N}{u}\sum_k\sigma_{n-1}^{ii}h_{iik}\langle  \p_k, X\rangle\\
&=&-\frac{N}{u}\sum_kd_Xf(\p_k)\langle X,\p_k\rangle.\nonumber
\end{eqnarray}

Denote
\begin{eqnarray}
&&A_i=e^{\kappa_i}(K(\sigma_{n-1})_i^2-\sum_{p\neq q}\sigma_{n-1}^{pp,qq}h_{ppi}h_{qqi}), \ \  B_i=2\sum_{l\neq i}\sigma_{n-1}^{ii,ll}e^{\kappa_l}h_{lli}^2, \nonumber \\  &&C_i=\sigma_{n-1}^{ii}\sum_le^{\kappa_l}h_{lli}^2; \  \ D_i=2\sum_{l\neq
i}\sigma_{n-1}^{ll}\frac{e^{\kappa_l}-e^{\kappa_i}}{\kappa_l-\kappa_i}h_{lli}^2,
\ \ E_i=\frac{1+\log P}{P\log P}\sigma_{n-1}^{ii}P_i^2\nonumber.
\end{eqnarray}
Using
$$
-\dsum_l\sigma_{n-1}^{pq,rs}h_{pql}h_{rsl}=\dsum_{p\neq
q}\sigma_{n-1}^{pp,qq}h_{pql}^2-\dsum_{p\neq
q}\sigma_{n-1}^{pp,qq}h_{ppl}h_{qql},
$$
and (\ref{e2.16}), for any $K>1$, we have,
\begin{eqnarray}
&&\sigma_{n-1}^{ii}\phi_{ii}\label{e2.18}\\
&\geq &\frac{1}{P\log P}[\sum_le^{\kappa_l}(K(\sigma_{n-1})_l^2-\dsum_{p\neq q}\sigma_{n-1}^{pp,qq}h_{ppl}h_{qql}+\dsum_{p\neq q}\sigma_{n-1}^{pp,qq}h_{pql}^2)\nonumber\\
&&+\sum_l\sigma_{n-1}^{ii}e^{\kappa_l}h_{lli}^2+\sum_{\alpha\neq
\beta}\sigma_{n-1}^{ii}\frac{e^{\kappa_{\alpha}}-e^{\kappa_{\beta}}}{\kappa_{\alpha}-\kappa_{\beta}}h_{\alpha\beta
i}^2-\frac{1+\log P}{P\log
P}\sigma_{n-1}^{ii}P_i^2\nonumber\\
&&-CP-CKPh_{11}^2]+(N-1)\sigma_{n-1}^{ii}h_{ii}^2+N\frac{\sigma_{n-1}^{ii}h_{ii}^2\langle
X,\p_i\rangle^2}{u^2}\nonumber\\
&&\geq \frac{1}{P\log P}\dsum_i(A_i+B_i+C_i+D_i-E_i)\nonumber\\
&&+(N-1)\sigma_{n-1}^{ii}h_{ii}^2+N\frac{\sigma_{n-1}^{ii}h_{ii}^2\langle
X,\p_i\rangle^2}{u^2}-\dfrac{C+CKh_{11}^2}{\log P}\nonumber.
\end{eqnarray}

\begin{lemm}\label{le2}
There exists a constant $\delta<\dfrac{1}{2}$ such that, if
$|\kappa_i|\leq \delta \kappa_1$, we have,
\begin{align*}
A_i+B_i+C_i+D_i-E_i\geq0,
\end{align*}
for sufficient large $K$ and $\kappa_1$.
\end{lemm}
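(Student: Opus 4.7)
The strategy is to combine Cauchy–Schwarz applied to $P_i^2$ in $E_i$ with the pointwise inequality of Lemma~\ref{leR} to show that the positive quadratic form $B_i + C_i + D_i$ dominates both the ``bad'' bilinear part of $A_i$ and the $E_i$ loss. The regime $|\kappa_i| \leq \delta \kappa_1$ is exactly the complementary case to Proposition~\ref{lea} (which handled $\kappa_i \geq \delta \kappa_1$), and Lemma~\ref{leR} is tailor-made for it.

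First, I would apply Cauchy–Schwarz with weights $e^{\kappa_l}$ to $P_i = \sum_l e^{\kappa_l} h_{lli}$, giving $P_i^2 \leq P \sum_l e^{\kappa_l} h_{lli}^2$, so that
\[
E_i \;\leq\; \tfrac{1+\log P}{\log P}\,C_i,\qquad\text{i.e.,}\qquad C_i - E_i \;\geq\; -\tfrac{1}{\log P}\,\sigma_{n-1}^{ii}\sum_l e^{\kappa_l} h_{lli}^2.
\]
Next, in $A_i$ the piece $e^{\kappa_i} K(\sigma_{n-1})_i^2 \geq 0$ can be discarded, and the remaining bilinear form $-e^{\kappa_i}\sum_{p\neq q}\sigma_{n-1}^{pp,qq} h_{ppi}h_{qqi}$ is split by AM–GM into coefficients of $h_{lli}^2$ carrying the factor $e^{\kappa_i}$. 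Since $|\kappa_i| \leq \delta \kappa_1$, this factor is exponentially smaller than $e^{\kappa_l}$ whenever $\kappa_l$ is of order $\kappa_1$, so these contributions are absorbed by $C_i$; for bounded $\kappa_l$ the coefficients $\sigma_{n-3}(\kappa|pq)$ stay bounded and are absorbed by $D_i$, whose weight $\tfrac{e^{\kappa_l}-e^{\kappa_i}}{\kappa_l-\kappa_i}$ has a positive lower bound in that regime.

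The main remaining debt is the $-\tfrac{1}{\log P}\sum_l \sigma_{n-1}^{ii} e^{\kappa_l} h_{lli}^2$ term. For the $l = i$ piece the factor $e^{\kappa_i}\leq e^{\delta\kappa_1}$ is again small and absorbed directly. For $l \neq i$ I would invoke Lemma~\ref{leR}: rearranged, it asserts that the coefficient of $h_{lli}^2$ in $\tfrac12(B_i+D_i)$ dominates $\tfrac{1}{\kappa_1}$ times the coefficient of $h_{lli}^2$ in $C_i$. Since $\log P \geq \kappa_1$, this $\tfrac{1}{\kappa_1}$ gain beats the $\tfrac{1}{\log P}$ loss (up to a controlled constant), and taking $\varepsilon_T$ small in Lemma~\ref{leR} together with $\kappa_1$ large closes the estimate.

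The main obstacle is the careful coordination of the parameters $\delta$, $\varepsilon_T$, and $K$: $K$ must be large enough to absorb the bounded error terms arising when $e^{\kappa_i}K(\sigma_{n-1})_i^2$ is used to soak up leftover pieces from the Cauchy–Schwarz splitting of $A_i$; $\delta$ must be chosen small so that $e^{(\delta-1)\kappa_1}$ decays quickly, making the $A_i$ bilinear form negligible against $C_i$; and $\varepsilon_T$ is fixed small in Lemma~\ref{leR} to force the desired strict inequality on the $h_{lli}^2$ coefficient. These choices must be made in compatible order (with $\delta$ depending on $\varepsilon_T$ as in Lemma~\ref{leR}). A subtle book-keeping point is to ensure that indices $l$ for which $\kappa_l$ is close to $\kappa_i$ (where the divided difference in $D_i$ could be small) are still handled — this is precisely why the \emph{sum} $B_i + D_i$, and not either term alone, is needed in Lemma~\ref{leR}.
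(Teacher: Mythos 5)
Your plan correctly identifies one ingredient of the paper's argument: a reserved portion of $B_i+D_i$ together with Lemma~\ref{leR} beats the $\tfrac{1}{\log P}$ loss on the $h_{lli}^2$, $l\neq i$, terms, precisely because $\log P\geq \kappa_1(1+o(1))$; this is the paper's step \eqref{e2.21}. But there are two genuine gaps. First, your treatment of $A_i$ is both unjustified and self-contradictory: you propose to discard the positive piece $e^{\kappa_i}K(\sigma_{n-1})_i^2$ and absorb the indefinite bilinear form $-e^{\kappa_i}\sum_{p\neq q}\sigma_{n-1}^{pp,qq}h_{ppi}h_{qqi}$ into $C_i$ and $D_i$, yet later you want $K(\sigma_{n-1})_i^2$ to ``soak up leftover pieces.'' The latter cannot work, since $(\sigma_{n-1})_i=\sum_p\sigma_{n-1}^{pp}h_{ppi}$ has no positive lower bound; and the absorption claim is not proved: the coefficients $\sigma_{n-3}(\kappa|pq)$ are not bounded (they may contain $\kappa_1$), and when $\kappa_i$ is close to $+\delta\kappa_1$ while $\kappa_p,\kappa_q$ are of order one, the factor $e^{\kappa_i}$ is \emph{larger}, not smaller, than $e^{\kappa_p}$, so comparing $e^{\kappa_i}\sigma_{n-3}(\kappa|pq)$ with $\sigma_{n-1}^{ii}e^{\kappa_p}$ or with the $D_i$ weight requires exactly the kind of delicate $\sigma_{n-1}$-specific algebra carried out in Proposition~\ref{lea} and Lemma~\ref{leR}, which you do not supply. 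The paper avoids all of this: by Lemma~\ref{Guan} (with $l=1$, so that $\sigma_1^{pp,qq}=0$) one has $A_i\geq 0$ outright for $K$ large, and $A_i$ is simply dropped.

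Second, and more seriously, the crude Cauchy--Schwarz $P_i^2\leq P\sum_l e^{\kappa_l}h_{lli}^2$ spends \emph{all} of $C_i$ and leaves the debt $-\tfrac{1}{\log P}\,\sigma_{n-1}^{ii}e^{\kappa_i}h_{iii}^2$, for which no payer exists: $B_i$ and $D_i$ contain only $h_{lli}^2$ with $l\neq i$, and $A_i$ provides no guaranteed multiple of $h_{iii}^2$ (take $h$ with $(\sigma_{n-1})_i=0$). Saying ``$e^{\kappa_i}\leq e^{\delta\kappa_1}$ is small'' does not name an absorbing term. The paper instead uses the finer expansion \eqref{e2.19}, which keeps the $l=i$ contribution of $C_i$ essentially intact (the loss is only of size $e^{2\kappa_i}/P\ll e^{\kappa_i}$) at the price of genuine cross terms $h_{iii}h_{lli}$; these are then handled by the Cauchy--Schwarz step \eqref{CauS} for $l\neq 1,i$, while the dangerous $l=1$ cross term is dominated, as in \eqref{e2.23}--\eqref{e2.25}, by the reserved half of the $l=1$ part of $B_i+D_i$, whose $h_{11i}^2$ coefficient equals $e^{\kappa_i}\sigma_{n-1}^{ii}\sum_{m\geq1}(\kappa_1-\kappa_i)^{m-1}/m!$ after the symmetrization identity \eqref{e2.24} --- an exponentially large factor $\sim e^{\kappa_1-\kappa_i}/(\kappa_1-\kappa_i)$ that makes the quadratic form in $(h_{iii},h_{11i})$ nonnegative for $\kappa_1$ large. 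This $l=1$ mechanism, which is the heart of the lemma in the regime $|\kappa_i|\leq\delta\kappa_1$, is absent from your sketch, so the proposal as written does not close.
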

\begin{proof} Firstly, using Lemma \ref{Guan}, we have $A_i> 0$,
for sufficient large constant $K$.
By Cauchy-Schwarz inequality, we have,
\begin{eqnarray}\label{e2.19}
P_i^2&= & e^{2\kappa_i}h_{iii}^2 +2 \dsum_{l\neq
i}e^{\kappa_i+\kappa_l}h_{iii}h_{lli}
+ (\dsum_{l\neq i}e^{\kappa_l}h_{lli})^2\\
&\leq & e^{2\kappa_i}h_{iii}^2 +2 \dsum_{l\neq
i}e^{\kappa_i+\kappa_l}h_{iii}h_{lli}+ (P-e^{\kappa_i})\dsum_{l\neq
i}e^{\kappa_l}h_{lli}^2. \nonumber
\end{eqnarray}
Using (\ref{e2.19}),  we have,
\begin{eqnarray}\label{e2.20}
&&B_i+C_i+D_i-E_i\\
&\geq&2\dsum_{l\neq i}e^{\kappa_l}\sigma_{n-1}^{ll,ii}h_{lli}^2 +
2\dsum_{l\neq
i}\dfrac{e^{\kappa_l}-e^{\kappa_i}}{\kappa_l-\kappa_i}\sigma_{n-1}^{ll}h_{lli}^2
-\dfrac{1}{\log P}\dsum_{l\neq
i}e^{\kappa_l}\sigma_{n-1}^{ii}h_{lli}^2\nonumber\\
&&+\frac{1+\log P}{P\log P}\dsum_{l\neq
i}e^{\kappa_l+\kappa_i}\sigma_{n-1}^{ii}h_{lli}^2+e^{\kappa_i}\sigma_{n-1}^{ii}h_{iii}^2 \nonumber\\
&&-\frac{1+\log P}{P\log P}e^{2\kappa_i}\sigma_{n-1}^{ii}h_{iii}^2
-2\frac{1+\log P}{P\log P}\dsum_{l\neq
i}e^{\kappa_i+\kappa_l}\sigma_{n-1}^{ii}h_{iii}h_{lli}  \nonumber.
\end{eqnarray}
Using Lemma \ref{leR}, there exists a constant
$\delta<\dfrac{1}{2}$, such that,
\begin{align}\label{e2.21}
\dfrac{3}{2}\dsum_{l\neq i}e^{\kappa_l}\sigma_{n-1}^{ll,ii}h_{lli}^2
+ \dfrac{3}{2}\dsum_{l\neq
i}\dfrac{e^{\kappa_l}-e^{\kappa_i}}{\kappa_l-\kappa_i}\sigma_{n-1}^{ll}h_{lli}^2
-\dfrac{1}{\log P}\dsum_{l\neq
i}e^{\kappa_l}\sigma_{n-1}^{ii}h_{lli}^2\geq 0.
\end{align}
On the other hand, we see that,
\begin{align}\label{CauS}
&\dsum_{l\neq i,1}e^{\kappa_l+\kappa_i}\sigma_{n-1}^{ii}h_{lli}^2
-2\dsum_{l\neq
i,1}e^{\kappa_i+\kappa_l}\sigma_{n-1}^{ii}h_{iii}h_{lli}\geq-\dsum_{l\neq
i,1}e^{\kappa_l+\kappa_i}\sigma_{n-1}^{ii}h_{iii}^2.
\end{align}
Then, using the above two inequalities, \eqref{e2.20} becomes, 
\begin{eqnarray}\label{e2.23}
&&B_i+C_i+D_i-E_i\\
&\geq&\frac{1+\log P}{P\log
P}e^{\kappa_1+\kappa_i}\sigma_{n-1}^{ii}h_{11i}^2+e^{\kappa_i}\sigma_{n-1}^{ii}h_{iii}^2 \nonumber\\
&&-\frac{1+\log P}{P\log P}\dsum_{l\neq
1}e^{\kappa_l+\kappa_i}\sigma_{n-1}^{ii}h_{iii}^2
-2\frac{1+\log P}{P\log P}e^{\kappa_i+\kappa_1}\sigma_{n-1}^{ii}h_{iii}h_{11i} \nonumber\\
&&+\dfrac{1}{2}e^{\kappa_1}\sigma_{n-1}^{11,ii}h_{11i}^2 +
\dfrac{1}{2}\dfrac{e^{\kappa_1}-e^{\kappa_i}}{\kappa_1-\kappa_i}\sigma_{n-1}^{11}h_{11i}^2.
\nonumber
\end{eqnarray}

Directly calculation shows that, 
\begin{align*}
e^{\kappa_i}\sigma_{n-1}^{ii}h_{iii}^2 -\frac{1+\log P}{P\log
P}\dsum_{l\neq
1}e^{\kappa_l+\kappa_i}\sigma_{n-1}^{ii}h_{iii}^2&\geq
(\frac{e^{\kappa_1}}{P}-\frac{1}{\log
P})e^{\kappa_i}\sigma_{n-1}^{ii}h_{iii}^2 \\
&\geq \frac{1}{n+1}e^{\kappa_i}\sigma_{n-1}^{ii}h_{iii}^2
\end{align*}
and
\begin{align*}
-2\frac{1+\log P}{P\log
P}e^{\kappa_i+\kappa_1}\sigma_{n-1}^{ii}|h_{iii}h_{11i}|\geq
-\frac{3}{P}e^{\kappa_i+\kappa_1}\sigma_{n-1}^{ii}|h_{iii}h_{11i}|\geq
-3e^{\kappa_i}\sigma_{n-1}^{ii}|h_{iii}h_{11i}|,
\end{align*}
hold for sufficient large $\kappa_1$. We let $l=1,k=n-1$ in \eqref{e2.11}, we have,
\begin{align}\label{e2.24}
e^{\kappa_1}\sigma_{n-1}^{11,ii}h_{11i}^2 +
\dfrac{e^{\kappa_1}-e^{\kappa_i}}{\kappa_1-\kappa_i}\sigma_{n-1}^{11}h_{11i}^2
=e^{\kappa_i}\sigma_{n-1}^{11,ii}h_{11i}^2 +
\dfrac{e^{\kappa_1}-e^{\kappa_i}}{\kappa_1-\kappa_i}\sigma_{n-1}^{ii}h_{11i}^2.
\end{align}
By Taylor expansion, we also have, 
\begin{align}\label{e2.25}
\dfrac{e^{\kappa_1}-e^{\kappa_i}}{\kappa_1-\kappa_i}\sigma_{n-1}^{ii}h_{11i}^2
=e^{\kappa_i}\dsum_{m\geq
1}\dfrac{(\kappa_1-\kappa_i)^{m-1}}{m!}\sigma_{n-1}^{ii}h_{11i}^2.
\end{align}
Combining the previous four formulas and using (\ref{e2.23}), we obtain, 
\begin{align}
B_i+C_i+D_i-E_i\geq&e^{\kappa_i}\sigma_{n-1}^{ii}[\frac{1}{n+1}h_{iii}^2-3|h_{iii}h_{11i}|
+\frac{1}{2}\dsum_{m\geq
1}\dfrac{(\kappa_1-\kappa_i)^{m-1}}{m!}h_{11i}^2] \geq 0,\nonumber
\end{align}
for sufficient large $\kappa_1$. 
\end{proof}
In $\Gamma_{n-1}$ cone, it is well known that the only possible negative eigenvalue is the smallest one. Since we have assumed that 
$\kappa_1\geq \kappa_2\geq \cdots\geq \kappa_n$, the possible non positive eigenvalue is $\kappa_n$. Hence, we can state the following little Lemma. 
\begin{lemm}\label{le6}
In $\Gamma_{n-1}$ cone, if $\kappa_n\leq 0$,
we have,  $$-\kappa_n\leq\frac{\kappa_1}{n-1}.$$ 
\end{lemm}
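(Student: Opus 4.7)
The plan is to exploit the identity
\[\sigma_{n-1}(\kappa)=\sigma_{n-1}(\kappa|n)+\kappa_n\,\sigma_{n-2}(\kappa|n)\]
together with two standard positivity facts in $\Gamma_{n-1}$: by definition $\sigma_{n-1}(\kappa)>0$, and the diagonal entry $\sigma_{n-1}^{nn}=\sigma_{n-2}(\kappa|n)$ is strictly positive for every $\kappa\in\Gamma_{n-1}$. Since $\kappa_n\leq 0$, these combine immediately to give
\[\sigma_{n-1}(\kappa|n)=\prod_{i=1}^{n-1}\kappa_i>-\kappa_n\,\sigma_{n-2}(\kappa|n)\geq 0.\]

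Next I promote $\prod_{i<n}\kappa_i>0$ to the positivity of every individual $\kappa_i$ with $i<n$. For this I invoke the classical cone inclusion $(\kappa|n)\in\Gamma_{n-2}(\mathbb{R}^{n-1})$, which supplies $\sigma_m(\kappa|n)>0$ for every $m\leq n-2$. Combined with $\sigma_{n-1}(\kappa|n)>0$ from the previous display, all elementary symmetric functions $\sigma_1,\ldots,\sigma_{n-1}$ of the $(n-1)$-vector $(\kappa|n)$ are strictly positive, so $(\kappa|n)$ actually lies in the positive cone of $\mathbb{R}^{n-1}$. In particular $\kappa_i>0$ for every $i<n$.

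Once every $\kappa_i$ with $i<n$ is positive, the ratio $\sigma_{n-2}(\kappa|n)/\sigma_{n-1}(\kappa|n)$ collapses to $\sum_{j<n}1/\kappa_j$, and the bound $\kappa_j\leq\kappa_1$ gives
\[\frac{\sigma_{n-2}(\kappa|n)}{\sigma_{n-1}(\kappa|n)}=\sum_{j=1}^{n-1}\frac{1}{\kappa_j}\geq\frac{n-1}{\kappa_1}.\]
Rewriting the first displayed inequality as $-\kappa_n<\sigma_{n-1}(\kappa|n)/\sigma_{n-2}(\kappa|n)$ and inserting the bound above yields $-\kappa_n<\kappa_1/(n-1)$, the desired conclusion. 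The argument is essentially algebraic; I do not foresee any serious obstacle, the only non-routine ingredient being the Garding-cone inclusion $(\kappa|n)\in\Gamma_{n-2}(\mathbb{R}^{n-1})$, which is a standard consequence of $\kappa\in\Gamma_{n-1}$ and can be taken for granted.
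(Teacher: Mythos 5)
Your proof is correct and follows essentially the same route as the paper: both start from $\sigma_{n-1}=\sigma_{n-1}(\kappa|n)+\kappa_n\sigma_{n-2}(\kappa|n)>0$ together with $\sigma_{n-2}(\kappa|n)>0$, and then bound $\sigma_{n-2}(\kappa|n)$ from below by a multiple of $\kappa_2\cdots\kappa_{n-1}$ using $\kappa_j\leq\kappa_1$ (you via the sum $\sum_{j<n}1/\kappa_j\geq (n-1)/\kappa_1$, the paper by splitting off the $\kappa_1$-term). The only real difference is that you explicitly justify $\kappa_1,\dots,\kappa_{n-1}>0$ through the cone inclusion $(\kappa|n)\in\Gamma_{n-2}(\mathbb{R}^{n-1})$, a fact the paper simply invokes as well known before stating the lemma.
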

\begin{proof}
It is easy to see that, 
$$\sigma_{n-1}(\kappa|n)=\kappa_1\cdots\kappa_{n-1},\text{ and }
\sigma_{n-2}(\kappa|1n)=\kappa_2\cdots\kappa_{n-1}.$$ We assume that
$\lambda=-\kappa_n/\kappa_1$. Then we have,
\begin{align*}
\kappa_1\cdots\kappa_{n-1}=&\sigma_{n-1}-\kappa_n\sigma_{n-2}(\kappa|n)\\
> &-\kappa_n\sigma_{n-2}(\kappa|n)=
\lambda\kappa_1\sigma_{n-2}(\kappa|n)\\
=&\lambda\kappa_1^2\sigma_{n-3}(\kappa|n1)+\lambda\kappa_1\sigma_{n-2}(\kappa|1n).
\end{align*}
Hence, we get,
\begin{align*}
(1-\lambda)\kappa_2\cdots\kappa_{n-1}
> &\lambda\kappa_1\sigma_{n-3}(\kappa|n1)\geq (n-2)\lambda\kappa_2\cdots\kappa_{n-1},
\end{align*}
which implies $\lambda<\dfrac{1}{n-1}$.
\end{proof}

\begin{lemm}\label{le1}
For the chosen constant $\delta$ in Lemma \ref{le2}, if $\kappa_i\geq \delta
\kappa_1$ and $n\geq 3$, we have,
\begin{align*}
A_i+B_i+C_i+D_i-E_i\geq 0,
\end{align*}
for sufficient large $K$ and $\kappa_1$. 
\end{lemm}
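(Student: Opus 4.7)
The strategy parallels the proof of Lemma \ref{le2} but replaces the use of Lemma \ref{leR}, whose hypothesis $|\kappa_i|<\delta\kappa_1$ fails here, by a direct appeal to Proposition \ref{lea}, whose hypothesis $\kappa_i\geq\delta\kappa_1$ is precisely what we now assume. The first step is to apply Proposition \ref{lea} at index $i$ with a small parameter $\varepsilon$ (to be fixed) and $K$ sufficiently large; since $\kappa_i>0$, dividing by $\kappa_i$ and multiplying through by $e^{\kappa_i}$ yields
\begin{equation*}
A_i \;\geq\; \dfrac{e^{\kappa_i}}{\kappa_i}\sigma_{n-1}^{ii}h_{iii}^2 \;-\; (1+\varepsilon)\dfrac{e^{\kappa_i}}{\kappa_i}\sum_{j\neq i}\sigma_{n-1}^{jj}h_{jji}^2.
\end{equation*}
The first term will cooperate with $C_i$ to control the $h_{iii}^2$ piece of $E_i$, while the negative tail is of order only $\kappa_i^{-1}e^{\kappa_i}$ and is therefore a lower-order perturbation of the leading $e^{\kappa_l}$-type contributions in $B_i$, $C_i$, $D_i$ for $\kappa_1$ large.

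The next step is to apply the Cauchy-Schwarz estimate \eqref{e2.19} for $P_i^2$ in exactly the form used in Lemma \ref{le2}, rewriting $-E_i$ as a quadratic form in $h_{iii}$ and $(h_{lli})_{l\neq i}$. Combining this with the above lower bound for $A_i$ and the expressions for $B_i$, $C_i$, $D_i$ produces a bilinear form whose diagonal positivity must be verified. The coefficient of $h_{iii}^2$ is bounded below by
\begin{equation*}
e^{\kappa_i}\sigma_{n-1}^{ii}\left(\dfrac{1}{\kappa_i}+1-\dfrac{(1+\log P)e^{\kappa_i}}{P\log P}\right)\;\geq\; e^{\kappa_i}\sigma_{n-1}^{ii}\left(\dfrac{1}{\kappa_i}-\dfrac{1}{\log P}\right)\;>\;0,
\end{equation*}
since $e^{\kappa_i}\leq P$ and $\log P>\kappa_1\geq\kappa_i$. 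For each $l\neq i$, the coefficient of $h_{lli}^2$ is the sum of the positive contributions $2e^{\kappa_l}\sigma_{n-1}^{ii,ll}+e^{\kappa_l}\sigma_{n-1}^{ii}+2\dfrac{e^{\kappa_l}-e^{\kappa_i}}{\kappa_l-\kappa_i}\sigma_{n-1}^{ll}+\dfrac{(1+\log P)e^{\kappa_l+\kappa_i}}{P\log P}\sigma_{n-1}^{ii}$ offset by the two negatives $-\dfrac{e^{\kappa_l}}{\log P}\sigma_{n-1}^{ii}$ and $-(1+\varepsilon)\dfrac{e^{\kappa_i}}{\kappa_i}\sigma_{n-1}^{ll}$. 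The convexity estimate $\dfrac{e^{\kappa_l}-e^{\kappa_i}}{\kappa_l-\kappa_i}\geq e^{\min(\kappa_l,\kappa_i)}$, together with $\sigma_{n-1}^{ll}>0$ on $\Gamma_{n-1}$, dominates the $\kappa_i^{-1}e^{\kappa_i}\sigma_{n-1}^{ll}$ correction coming from the new $A_i$ bound.

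Finally, the off-diagonal cross terms $h_{iii}h_{lli}$ generated by the expansion of $E_i$ are absorbed exactly as in the Cauchy-Schwarz trick \eqref{CauS} of Lemma \ref{le2}, matched against the diagonal positivity established above. The main technical obstacle is expected in the subcase where $\kappa_l<\kappa_i$ and $\sigma_{n-1}^{ii,ll}<0$, where the positive contributions must beat the sum of the two negative terms uniformly. Here Lemma \ref{le6} is invoked to restrict negative eigenvalues to at most $\kappa_n$ with $|\kappa_n|\leq\kappa_1/(n-1)$, keeping $\sigma_{n-1}^{ll}>0$ and preventing destructive cancellations; this is precisely where the hypothesis $n\geq 3$ enters the argument.
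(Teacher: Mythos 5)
Your skeleton matches the paper's (apply Proposition \ref{lea} at the index $i$, redo the bookkeeping of \eqref{e2.20}, absorb cross terms by Cauchy--Schwarz, invoke Lemma \ref{le6} and $n\geq 3$ for the single possible negative eigenvalue), but the decisive quantitative step fails as you have normalized it. Dividing the Proposition by $\kappa_i$ leaves the tail $(1+\varepsilon)\frac{e^{\kappa_i}}{\kappa_i}\sum_{j\neq i}\sigma_{n-1}^{jj}h_{jji}^2$, and you must beat it with $D_i$, i.e.\ you need $2\frac{e^{\kappa_l}-e^{\kappa_i}}{\kappa_l-\kappa_i}\geq(1+\varepsilon)\frac{e^{\kappa_i}}{\kappa_i}$, equivalently $2\kappa_i\frac{1-e^{\kappa_l-\kappa_i}}{\kappa_i-\kappa_l}\geq 1+\varepsilon$. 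Your justification, the convexity bound $\frac{e^{\kappa_l}-e^{\kappa_i}}{\kappa_l-\kappa_i}\geq e^{\min(\kappa_l,\kappa_i)}$, is exponentially too weak when $\kappa_l\ll\kappa_i$: it would require $2\kappa_i\geq(1+\varepsilon)e^{\kappa_i-\kappa_l}$. Using the correct asymptotics $\frac{e^{\kappa_l}-e^{\kappa_i}}{\kappa_l-\kappa_i}\approx\frac{e^{\kappa_i}}{\kappa_i-\kappa_l}$, the needed inequality amounts to $2\kappa_i\geq(1+\varepsilon)(\kappa_i-\kappa_l)$, and this genuinely fails in the admissible range: Lemma \ref{le1} only assumes $\kappa_i\geq\delta\kappa_1$ with $\delta<1/200$, while $\kappa_l=\kappa_n$ may have $-\kappa_n$ as large as $\kappa_1/(n-1)\gg\kappa_i$ (Lemma \ref{le6} bounds it only against $\kappa_1$, not against $\kappa_i$). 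So the $h_{nni}^2$ coefficient in your quadratic form can be negative; Lemma \ref{le6} and $n\geq 3$ cannot rescue a comparison normalized by $\kappa_i$.

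The paper avoids exactly this trap: since $K(\sigma_{n-1})_i^2-\sigma_{n-1}^{pp,qq}h_{ppi}h_{qqi}\geq 0$ for $K$ large (Lemma \ref{Guan}), one may replace $\kappa_i$ by $\kappa_1\geq\kappa_i$ in front of this bracket before invoking Proposition \ref{lea}, so the tail shrinks to $(1+\varepsilon)\frac{e^{\kappa_i}}{\kappa_1}\sum_{j\neq i}\sigma_{n-1}^{jj}h_{jji}^2$ and the required divided-difference bound becomes \eqref{e2.29}, namely $2\kappa_1\frac{1-e^{\kappa_l-\kappa_i}}{\kappa_i-\kappa_l}\geq 2\frac{n-1}{n}-\epsilon>1+\varepsilon$ for $n\geq 3$, which does hold in all three cases via Lemma \ref{le6}; this yields \eqref{e2.28}, and $\frac{1}{\kappa_1}\geq\frac{1}{\log P}$ still controls the $h_{iii}^2$ piece of $E_i$. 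Two further slips: your list of positive $h_{lli}^2$ contributions double counts $C_i$ (you keep $e^{\kappa_l}\sigma_{n-1}^{ii}$ while writing the $E_i$ contribution as only $-\frac{e^{\kappa_l}}{\log P}\sigma_{n-1}^{ii}$; the $l\neq i$ part of $C_i$ is precisely what reduces $E_i$ to that size in \eqref{e2.20}), and you omit the cone inequality $2\kappa_1\sigma_{n-1}^{ll,ii}\geq\sigma_{n-1}^{ii}$, which is what the paper uses in \eqref{e2.30} to absorb $-\frac{1}{\log P}\sum_{l\neq i}e^{\kappa_l}\sigma_{n-1}^{ii}h_{lli}^2$ into $B_i$; without it (Lemma \ref{leR} being unavailable here) that negative term is also left uncontrolled.
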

\begin{proof}
Using  (\ref{e2.20}), we have, 
\begin{eqnarray}\label{e2.27}
&&A_i+B_i+C_i+D_i-E_i\\
&\geq& e^{\kappa_i}(K(\sigma_{n-1})_i^2-\sigma_{n-1}^{pp,qq}h_{ppi}h_{qqi})+2\dsum_{l\neq i}e^{\kappa_l}\sigma_{n-1}^{ll,ii}h_{lli}^2\nonumber\\
&&-\dfrac{1}{\log P}\dsum_{l\neq
i}e^{\kappa_l}\sigma_{n-1}^{ii}h_{lli}^2+\dfrac{1+\log
P}{P\log P}\dsum_{l\neq
i}e^{\kappa_l+\kappa_i}\sigma_{n-1}^{ii}h_{lli}^2 \nonumber\\
&&+ 2\dsum_{l\neq
i}\dfrac{e^{\kappa_l}-e^{\kappa_i}}{\kappa_l-\kappa_i}\sigma_{n-1}^{ll}h_{lli}^2
+e^{\kappa_i}\sigma_{n-1}^{ii}h_{iii}^2
-\frac{1+\log
P}{P\log P}e^{2\kappa_i}\sigma_{n-1}^{ii}h_{iii}^2 \nonumber\\
&&-2\frac{1+\log P}{P\log P}\dsum_{l\neq
i}e^{\kappa_i+\kappa_l}\sigma_{n-1}^{ii}h_{iii}h_{lli}. \nonumber
\end{eqnarray}

We claim that the following inequality holds for sufficient large $\kappa_1$,
\begin{align}\label{e2.28}
e^{\kappa_i}(K(\sigma_{n-1})_i^2-\sigma_{n-1}^{pp,qq}h_{ppi}h_{qqi})+2\dsum_{l\neq
i}\dfrac{e^{\kappa_l}-e^{\kappa_i}}{\kappa_l-\kappa_i}\sigma_{n-1}^{ll}h_{lli}^2
\geq\frac{1}{\log
P}e^{\kappa_i}\sigma_{n-1}^{ii}h_{iii}^2.
\end{align}
In view of Proposition \ref{lea}, we need to prove that, for given arbitrary small constant $\epsilon$, if $\kappa_1$ is sufficient large, we have,
\begin{align}\label{e2.29}
2\dfrac{1-e^{\kappa_l-\kappa_i}}{\kappa_i-\kappa_l}\kappa_1\geq
2\frac{n-1}{n}-\epsilon,
\end{align}
for all $l\neq i$. We divide into three cases to discuss.
\par
\noindent Case (i): $\kappa_l\geq\kappa_i$. In this case, we obviously have,
$$
\dfrac{1-e^{\kappa_l-\kappa_i}}{\kappa_i-\kappa_l}=\dfrac{e^{\kappa_l-\kappa_i}-1}{\kappa_l-\kappa_i}\geq
1.
$$
It is easy to get (\ref{e2.29}) for sufficient large $\kappa_1$. \\
\noindent Case (ii): $\kappa_i-\kappa_l\geq C_0$ where we take, 
$$C_0\geq \log\frac{2(n-1)}{n\epsilon}.$$ Then, we have, \begin{align*}
2\dfrac{1-e^{\kappa_l-\kappa_i}}{\kappa_i-\kappa_l}\kappa_1\geq
\frac{2\kappa_1}{\kappa_i-\kappa_l}(1-e^{-C_0}).
\end{align*}
Since $0<\kappa_i\leq \kappa_1$, if $\kappa_l\geq 0$, it is easy to see,
 $$\frac{2\kappa_1}{\kappa_i-\kappa_l}\geq  2\frac{n-1}{n}.$$ If $\kappa_l<0$, in $\Gamma_{n-1}$, we only have one negative eigenvalue, by Lemma \ref{le6}, we have,
$$\frac{2\kappa_1}{\kappa_i-\kappa_l}\geq 2\frac{n-1}{n}.$$ 
Combining the previous four inequalities and $n\geq 3$, we have \eqref{e2.29}.\\
\noindent Case (iii): $0<\kappa_i-\kappa_l\leq C_0$ where $C_0$ is defined in the previous case. In this case, using mean value theorem, we have, 
$$
\dfrac{1-e^{\kappa_l-\kappa_i}}{\kappa_i-\kappa_l}
=\dfrac{1}{e^{\kappa_i}}\dfrac{e^{\kappa_i}-e^{\kappa_l}}{\kappa_i-\kappa_l}
=\dfrac{e^{\xi}}{e^{\kappa_i}}\geq\dfrac{e^{\kappa_{l}}}{e^{\kappa_i}}\geq
e^{-C_{\varepsilon_{N}}}.
$$
Here $\xi$ is the mean value of $\kappa_i$ and $\kappa_l$. Since $\kappa_1$ is sufficient large, this yields (\ref{e2.29}).
In a word, \eqref{e2.28} hods for any case.

Note that, in cone $\Gamma_{n-1}$, 
\begin{align*}
2\kappa_1\sigma_{n-3}(\kappa|il)-\sigma_{n-2}(\kappa|i)
=&2\kappa_1\sigma_{n-3}(\kappa|il)-\kappa_l\sigma_{n-3}(\kappa|il)-\sigma_{n-2}(\kappa|il)\\
\geq&\kappa_1\sigma_{n-3}(\kappa|il)-\sigma_{n-2}(\kappa|il)\\
=&\kappa_1^2\sigma_{n-4}(\kappa|il1)+\kappa_1\sigma_{n-3}(\kappa|il1)-\sigma_{n-2}(\kappa|il)\\
=&\kappa_1^2\sigma_{n-4}(\kappa|il1)>0,
\end{align*}
is true for all $i,l$. It implies, 
\begin{align*}
2\kappa_1\sigma_{n-1}^{ll,ii}\geq\sigma_{n-1}^{ii}.
\end{align*}
Using the above inequality, we have, 
\begin{align}\label{e2.30}
2\dsum_{l\neq i}e^{\kappa_l}\sigma_{n-1}^{ll,ii}h_{lli}^2
-\dfrac{1}{\log P}\dsum_{l\neq
i}e^{\kappa_l}\sigma_{n-1}^{ii}h_{lli}^2\geq 0.
\end{align}
On the other hand, we have,
\begin{eqnarray}\label{e2.31}
&&\dfrac{1+\log P}{P\log P}\dsum_{l\neq
i}e^{\kappa_l+\kappa_i}\sigma_{n-1}^{ii}h_{lli}^2
-2\frac{1+\log P}{P\log P}\dsum_{l\neq
i}e^{\kappa_i+\kappa_l}\sigma_{n-1}^{ii}h_{iii}h_{lli}\\
&\geq&-\dfrac{1+\log P}{P\log P}\dsum_{l\neq
i}e^{\kappa_l+\kappa_i}\sigma_{n-1}^{ii}h_{iii}^2. \nonumber
\end{eqnarray}
Inserting (\ref{e2.28}),(\ref{e2.30}) and (\ref{e2.31}) into
(\ref{e2.27}), we obtain, 
\begin{eqnarray*}
&&A_i+B_i+C_i+D_i-E_i\\
&\geq& \frac{1}{\log
P}e^{\kappa_i}\sigma_{n-1}^{ii}h_{iii}^2+e^{\kappa_i}\sigma_{n-1}^{ii}h_{iii}^2
-\frac{1+\log P}{P\log
P}e^{2\kappa_i}\sigma_{n-1}^{ii}h_{iii}^2\nonumber\\
&&-\dfrac{1+\log P}{P\log P}\dsum_{l\neq
i}e^{\kappa_l+\kappa_i}\sigma_{n-1}^{ii}h_{iii}^2\nonumber\\
&=&0\nonumber.
\end{eqnarray*}

\end{proof}

For the negative part, we have the following estimate. 

\begin{lemm}\label{le3}
If $-\kappa_i\geq \delta \kappa_1$ and $n\geq 3$, then we also have,
\begin{align*}
A_i+B_i+C_i+D_i-E_i\geq 0,
\end{align*}
for sufficient large $K$ and $\kappa_1$. 
\end{lemm}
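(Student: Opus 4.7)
My plan parallels the proof of Lemma \ref{le1}, but exploits the exponential smallness of $e^{\kappa_n}$. First I would note that in $\Gamma_{n-1}$ at most one eigenvalue can be non-positive, and with the ordering $\kappa_1\geq\cdots\geq\kappa_n$ this negative index must be $n$. Hence the hypothesis $-\kappa_i\geq\delta\kappa_1$ forces $i=n$, and Lemma \ref{le6} gives the complementary bound $|\kappa_n|\leq\kappa_1/(n-1)$, so $e^{\kappa_n}\leq e^{-\delta\kappa_1}$ is exponentially small when $\kappa_1$ is large.

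I would then start from the expansion \eqref{e2.20} specialized to $i=n$ and observe that every term there carrying a prefactor $e^{\kappa_n}$ or $e^{2\kappa_n}$ is bounded by $e^{-\delta\kappa_1}$ times a polynomial in $\kappa$ and the third derivatives $h_{lln}$. This includes all of $A_n$ (since $A_n=e^{\kappa_n}(\cdot)$), the $h_{nnn}^2$ contribution coming from the first summand of \eqref{e2.19}, and the cross term $e^{\kappa_n+\kappa_l}h_{nnn}h_{lln}$. For $\kappa_1$ sufficiently large these pieces can be absorbed into the positive contributions of $B_n+C_n+D_n-E_n$. This is the mechanism that replaces the use of Proposition \ref{lea} in Lemma \ref{le1}, which is in fact unavailable here because it requires $\kappa_i>0$.

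After this reduction, what remains is to establish, for each $l\neq n$, the pointwise inequality
\begin{equation*}
2\sigma_{n-1}^{nn,ll}e^{\kappa_l}+2\sigma_{n-1}^{ll}\dfrac{e^{\kappa_l}-e^{\kappa_n}}{\kappa_l-\kappa_n}\geq \dfrac{\sigma_{n-1}^{nn}}{\log P}e^{\kappa_l},
\end{equation*}
which would give $B_n+D_n\geq\dfrac{1}{\log P}\sigma_{n-1}^{nn}\sum_{l\neq n}e^{\kappa_l}h_{lln}^2$ and thereby dominate the dangerous negative contribution in $C_n-E_n$. Using the identities $\sigma_{n-1}^{nn}=\kappa_l\sigma_{n-1}^{nn,ll}+\sigma_{n-2}(\kappa|nl)$ and $\sigma_{n-1}^{ll}=\kappa_n\sigma_{n-1}^{nn,ll}+\sigma_{n-2}(\kappa|nl)$, together with the bound $\kappa_l-\kappa_n\leq\tfrac{n}{n-1}\kappa_1$ from Lemma \ref{le6} and $\log P\sim\kappa_1$, this reduces to a cone-geometric inequality involving $\sigma_{n-3}(\kappa|nl)$ and $\sigma_{n-2}(\kappa|nl)=\prod_{j\neq n,l}\kappa_j$.

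The main obstacle is verifying this last inequality inside $\Gamma_{n-1}$. Because $\sigma_{n-1}^{nn}$ is typically the largest among the $\sigma_{n-1}^{ii}$ in this regime (it omits the most negative eigenvalue), the inequality is tight and will likely need a case split according to whether $\kappa_l$ is comparable to $\kappa_1$ or of much smaller order, paralleling the case analysis in Lemma \ref{le1}, together with positivity of the product $\sigma_{n-2}(\kappa|nl)$ in the cone. The hypothesis $n\geq 3$ is essential at this step to ensure the elementary symmetric functions under consideration are non-trivial.
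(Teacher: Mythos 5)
Your outline follows the paper's own route more closely than you may realize, but it stops exactly at the step where the work is: your ``remaining pointwise inequality'' is precisely the paper's key estimate \eqref{e2.35} (with coefficient $2$ where the paper keeps only $5/3$), and the paper proves it with no case split, using exactly the ingredients you list. Subtracting your two identities gives $\sigma_{n-1}^{nn}=\sigma_{n-1}^{ll}+(\kappa_l-\kappa_n)\sigma_{n-1}^{nn,ll}$, and then two observations finish it: first, $\frac{\kappa_l-\kappa_n}{\log P}\le \frac{n}{n-1}\frac{\kappa_1}{\log P}\le \frac{3}{2}<2$ (Lemma \ref{le6} plus $\log P\ge \kappa_1$, and $\sigma_{n-1}^{nn,ll}=\sigma_{n-3}(\kappa|nl)\ge 0$ in $\Gamma_{n-1}$); second, $2\,\frac{e^{\kappa_l}-e^{\kappa_n}}{\kappa_l-\kappa_n}\ge \frac{2(n-1)}{n}\bigl(1-e^{\kappa_n-\kappa_l}\bigr)\frac{e^{\kappa_l}}{\kappa_1}\ge \frac{e^{\kappa_l}}{\log P}$ for $\kappa_1$ large, since $e^{\kappa_n-\kappa_l}\le e^{-\delta\kappa_1}$ and $\frac{2(n-1)}{n}\ge \frac{4}{3}>1$ when $n\ge 3$, with $\sigma_{n-1}^{ll}\ge 0$. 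This is where $n\ge 3$ and Lemma \ref{le6} enter; the inequality is not tight and your proposed case split on the size of $\kappa_l$ is unnecessary. As written, though, the decisive estimate is asserted as ``the main obstacle'' rather than proved, so the argument is incomplete at its center.

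The other loose point is the disposal of the $h_{nnn}$--terms. There is no need to ``absorb'' $A_n$: by Lemma \ref{Guan} it is nonnegative for $K$ large and can simply be dropped, which is what the paper does. More seriously, the $l=1$ cross term $-2\frac{1+\log P}{P\log P}e^{\kappa_n+\kappa_1}\sigma_{n-1}^{nn}h_{nnn}h_{11n}$ is \emph{not} exponentially small relative to the only available $h_{nnn}^2$--term $e^{\kappa_n}\sigma_{n-1}^{nn}h_{nnn}^2$ (after dividing by $e^{\kappa_n}\sigma_{n-1}^{nn}$ its coefficient is of order $1/\log P$, not $e^{-\delta\kappa_1}$), so it cannot be waved away by smallness of $e^{\kappa_n}$; one must split it by a weighted Cauchy--Schwarz, charging part to $e^{\kappa_n}\sigma_{n-1}^{nn}h_{nnn}^2$ and the resulting $h_{11n}^2$--cost (which \emph{is} exponentially small) to the slack between your coefficient $2$ and the needed $5/3$. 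With that bookkeeping your variant closes, and it differs from the paper only in this step: the paper instead restricts the Cauchy--Schwarz \eqref{CauS} to $l\neq 1,n$, reserves one third of the $l=1$ portion of $B_i+D_i$, and repeats the endgame of Lemma \ref{le2} via \eqref{e2.24}--\eqref{e2.25}. So the strategy is sound and essentially the paper's, but both the key inequality and the absorption step need to be carried out, not just announced.
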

\begin{proof}
Firstly, for sufficient large constant $K$, by Lemma \ref{Guan}, we have $A_i> 0$. 
In this case, the only possible negative eigenvalue is $\kappa_n$. By Lemma \ref{le6}, we know that $-\kappa_i<\dfrac{1}{n-1}\kappa_1$. Then using the similar argument of the inequality \eqref{e2.29} in the previous Lemma, we have, 
$$\dfrac{5\kappa_1}{3}\dfrac{e^{\kappa_l}-e^{\kappa_i}}{\kappa_l-\kappa_i}\geq \frac{5}{6}(2\frac{n-1}{n}-\epsilon)e^{\kappa_l}.$$
Since $n\geq 3$, the coefficient of the right hand side in the above inequality is bigger than $1$ for sufficient small $\epsilon$. Hence, using Lemma \ref{leR}, we have,  
\begin{align}\label{e2.35}
\dfrac{5}{3}\dsum_{l\neq i}e^{\kappa_l}\sigma_{n-1}^{ll,ii}h_{lli}^2
+ \dfrac{5}{3}\dsum_{l\neq
i}\dfrac{e^{\kappa_l}-e^{\kappa_i}}{\kappa_l-\kappa_i}\sigma_{n-1}^{ll}h_{lli}^2
-\dfrac{1}{\log P}\dsum_{l\neq
i}e^{\kappa_l}\sigma_{n-1}^{ii}h_{lli}^2\geq 0.
\end{align}
Using \eqref{CauS}, \eqref{e2.35} and \eqref{e2.20}, we obtain,
\begin{eqnarray*}
&&B_i+C_i+D_i-E_i \\
&\geq&\frac{1+\log P}{P\log
P}e^{\kappa_1+\kappa_i}\sigma_{n-1}^{ii}h_{11i}^2+e^{\kappa_i}\sigma_{n-1}^{ii}h_{iii}^2 \nonumber\\
&&-\frac{1+\log P}{P\log P}\dsum_{l\neq
1}e^{\kappa_l+\kappa_i}\sigma_{n-1}^{ii}h_{iii}^2
-2\frac{1+\log P}{P\log P}e^{\kappa_i+\kappa_1}\sigma_{n-1}^{ii}h_{iii}h_{11i} \nonumber\\
&&+\dfrac{1}{3}e^{\kappa_1}\sigma_{n-1}^{11,ii}h_{11i}^2 +
\dfrac{1}{3}\dfrac{e^{\kappa_1}-e^{\kappa_i}}{\kappa_1-\kappa_i}\sigma_{n-1}^{11}h_{11i}^2 .   \nonumber
\end{eqnarray*}
The last expression is similar to (\ref{e2.23}). Thus, using similar argument in Lemma \ref{le2}, it is nonnegative. 
\end{proof}

Now, we are in the position to prove our main theorem. \\

\noindent {\bf Proof of Theorem \ref{theo2}:} For $n\geq 3$, using Lemma \ref{le2}, Lemma \ref{le1} and
Lemma \ref{le3} in (\ref{e2.18}), we obtain,

\begin{eqnarray*}
0&\geq&\sigma_{n-1}^{ii}\phi_{ii}\\
&\geq &\frac{1}{P\log P}\dsum_i(A_i+B_i+C_i+D_i-E_i)\nonumber\\
&&+(N-1)\sigma_{n-1}^{ii}h_{ii}^2+N\frac{\sigma_{n-1}^{ii}h_{ii}^2\langle
X,\p_i\rangle^2}{u^2}-\dfrac{C+CKh_{11}^2}{\log P}\nonumber\\
&\geq&(N-1)c_0h_{11}-\dfrac{C+CKh_{11}^2}{\log
P}\nonumber.
\end{eqnarray*}
Here we have used $$\sigma_{n-1}^{11}h_{11}\geq c_0.$$
Choosing sufficient large $N$, we get an upper bound of $h_ {11}$. 

For $n=2$, the equation is a quasi linear elliptic equation. The $C^2$ estimate is well known.

\section{Some application}
Let's gives some applications. The first is to prove existence result, Theorem \ref{exist}. \\

{\bf Proof of Theorem \ref{exist}: }  
We use continuity method to solve the existence result.  For $0\leq t\leq 1$, according to \cite{CNS5}, we consider the family of functions,
\begin{eqnarray}
f^t(X,\nu)=tf(X,\nu)+(1-t)C_n^2[\frac{1}{|X|^k}+\varepsilon(\frac{1}{|X|^k}-1)],\nonumber
\end{eqnarray}
where $\varepsilon$ is sufficient small constant satisfying $$0< f_0\leq \min_{r_1\leq \rho\leq r_2} (\frac{1}{\rho^k}+\varepsilon(\frac{1}{\rho^k}-1)),$$ and $f_0$ is some positive constant. The $C^0$ and $C^1$ estimates is same to the proof in \cite{GRW}. 
For $n\geq 3$, the $C^2$ estimate comes from Theorem \ref{theo2}. The openness comes from \cite{CNS5}. By continuity method and Evans-Krylov theory, we obtain Theorem \ref{exist}.  We complete our proof. \\

The proof of the Corollary \ref{Coro}  is similar to Theorem \ref{theo2}. Using the Corollary and the boundary estimates obtained in \cite{B}, we have the following existence result for Dirichelt problem.

\begin{theo}
Suppose $\Omega\subset \mathbb R^n$ is a bounded domain with smooth boundary. Suppose $f(p, u, x)\in C^2(\mathbb R^n\times \mathbb R\times \bar\Omega)$ is a positive function with $f_u\ge 0$. Suppose there is a subsolution $\underline{u}\in C^3(\bar \Omega)$ satisfying  \begin{equation}\label{2.1.1}
\left\{\begin{matrix}\sigma_{n-1}[D^2\underline{u}]&\geq &f(x,\underline{u},D\underline{u}), \\
\underline{u}|_{\partial \Omega}&=&\varphi.\end{matrix}\right.
\end{equation}
 then the Dirichlet problem (\ref{1.3}) has a unique $C^{3,\alpha}$ solution $u$ for any $0<\alpha<1$. \end{theo}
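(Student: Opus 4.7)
The plan is to solve the Dirichlet problem by the continuity method, interpolating through a one-parameter family for which $\underline{u}$ is a solution at $t=0$ and \eqref{1.3} is recovered at $t=1$. Concretely, for $t\in[0,1]$ I would consider
\begin{equation*}
\sigma_{n-1}[D^2 u^t] = t\,f(x,u^t,Du^t) + (1-t)\,\sigma_{n-1}[D^2 \underline{u}] \ \text{ in }\ \Omega, \qquad u^t = \varphi \ \text{ on }\ \partial\Omega.
\end{equation*}
Since $\sigma_{n-1}[D^2\underline{u}] \geq f(x,\underline{u},D\underline{u})$, the convex combination preserves the inequality, so $\underline{u}$ remains a subsolution for every $t$. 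Let $S \subset [0,1]$ denote the set of parameters for which this problem has an $(n-1)$-convex solution in $C^{3,\alpha}(\bar\Omega)$; the aim is $S=[0,1]$, at which point $t=1$ delivers the desired solution. Uniqueness follows from the standard comparison principle for $(n-1)$-convex solutions, which works because $f_u \geq 0$.

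For openness of $S$, at any admissible solution $u^t$ the linearization $Lv = \sigma_{n-1}^{ij}(D^2 u^t)\,v_{ij} - t f_{p_k}\,v_k - t f_u\,v$ is uniformly elliptic (since $u^t\in\Gamma_{n-1}$) with nonpositive zeroth-order coefficient, so the maximum principle and Schauder theory apply and the implicit function theorem in $C^{2,\alpha}$ produces a solution for nearby $t'$. For closedness one needs $C^{2,\alpha}$ a priori estimates uniform in $t$. The $C^0$ lower bound $u^t \geq \underline{u}$ is by comparison (using $f_u\geq 0$); the upper bound comes from the maximum principle, since at an interior maximum one would have $\sigma_{n-1}[D^2 u^t]\leq 0$, contradicting positivity of the right-hand side. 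Boundary and interior gradient estimates come from the usual barrier-plus-maximum-principle arguments, using $\underline{u}$ below and a standard supersolution above. The boundary $C^2$ estimate is the content of the cited reference \cite{B} under exactly the subsolution hypothesis; combined with Corollary \ref{Coro}, which gives $\|u^t\|_{C^2(\bar\Omega)} \leq C + \max_{\partial\Omega}|D^2 u^t|$, this yields a uniform global $C^2$ bound. Finally, since $\log\sigma_{n-1}$ is concave on $\Gamma_{n-1}$, Evans--Krylov together with Krylov's boundary regularity gives $C^{2,\alpha}$, and a Schauder bootstrap promotes this to $C^{3,\alpha}$.

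The principal obstacle is the boundary $C^2$ estimate, which must be extracted from the subsolution hypothesis alone; this is the step delegated to \cite{B}, and one must check that the form of the estimate there is exactly what the closedness argument requires, with constants tracked so as not to degenerate as $t\to 1$. Everything else is the standard continuity-method package: the genuinely new ingredient that makes the whole scheme run is Corollary \ref{Coro}, which reduces the global second-order estimate to the boundary, and which itself rests on the inequality developed in Section 2 and the test-function argument of Theorem \ref{theo2}.
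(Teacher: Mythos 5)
Your proposal follows essentially the same route as the paper, which disposes of this theorem in one sentence by combining Corollary \ref{Coro} (the reduction of the global $C^2$ estimate to the boundary) with the boundary second-order estimate of \cite{B} under the subsolution hypothesis, leaving the continuity method and Evans--Krylov implicit. Your write-up merely spells out the standard continuity-method details (openness via the linearization with zeroth-order coefficient $-tf_u\le 0$, $C^0$ and gradient barriers from $\underline{u}$, uniqueness by comparison using $f_u\ge 0$) that the paper takes for granted.
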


Then, we consider the prescribed curvature problem for spacelike graph hypersurface in Minkowski space.

 We present some setting of that problem. If function $u$ is the description function and  hypersurface $M=\text{ graph } u$. $u$ is defined in some bounded domain $\Omega\subset \mathbb{R}^n$.  The Minkowski space $\mathbb{R}^{n,1}$ is defined by the following metric,
$$ds^2=dx_1^2+\cdots dx_{n}^2-dx_{n+1}^2.$$ Since $M$ is space like, in \cite{Ba}, the uniformly $C^1$ bound has been obtained for equation \eqref{1.66}. Namely, there is some constant $\theta$, such that, $$\sup_{\bar{\Omega}}|D u|\leq \theta <1.$$ The induce metric on $M$ is,
$$g_{ij}=\delta_{ij}-D_iuD_ju, \ \  1\leq i,j\leq n.$$ The second fundamental form is,
$$h_{ij}=\frac{D_{ij}u}{\sqrt{1-|Du|^2}}.$$ We still denote the principal curvature of $M$ by $\kappa_1,\cdots,\kappa_n$. We also define the second fundamental form,  
\begin{equation}
h_{ij}=\langle\partial_iX,\partial_j\nu\rangle,
\end{equation}
Here $\langle,\rangle$ is the Minkowski inner product defined by metric $ds^2$ in the above. 
Then, for space like hypersurface, we have different Gauss formula and Gauss equation,
\begin{equation}
\begin{array}{rll}
X_{ij}=& h_{ij}\nu\quad {\rm (Gauss\ formula)}\\
R_{ijkl}=&-(h_{ik}h_{jl}-h_{il}h_{jk})\quad {\rm (Gauss\ equation)},\\
\end{array}
\end{equation}
where $R_{ijkl}$ is the $(4,0)$-Riemannian curvature tensor. Hence, the communication formula also
change a little bit, 
\begin{equation}
\begin{array}{rll}
h_{ijkl}=& h_{ijlk}+h_{mj}R_{imlk}+h_{im}R_{jmlk}\\
=& h_{klij}-(h_{mj}h_{il}-h_{ml}h_{ij})h_{mk}-(h_{mj}h_{kl}-h_{ml}h_{kj})h_{mi}.\\
\end{array}
\end{equation}
Now let's give the proof of Theorem \ref{theo5}.\\

\noindent {\bf Proof of Theorem \ref{theo5}:}
$C^0$ estimate comes from comparison principal. We also have the $C^1$ estimate. For $C^2$ estimates on the boundary, using the sub solution and the $C^2$ boundary estimate argument  \cite{B}, we can obtain it. For the interior, we use the similar trick in section 3. Hence, for function $u$, we consider the following test function,
$$
\phi=\log\log P+\dfrac{N}{2}|Du|^2.
$$
where function $P$ is also defined by $$P=\dsum_le^{\kappa_l}. $$ Suppose that $M$ achieve its maximum
value in $\Omega$ at some point $x_0$. We can assume that matrix
$(u_{ij})$ is diagonal by rotating the coordinate, and $\kappa_1\geq
\kappa_2\geq\cdots\geq \kappa_n$. 
Hence, at $x_0$, differentiating $\phi$ twice, we have
\begin{equation}\label{e3}
\phi_i=\dfrac{P_i}{P\log P}+N u_iu_{ii}=0,
\end{equation}
and,
\begin{equation}\label{4}
\phi_{ii}=\dfrac{P_{ii}}{P\log P}-\dfrac{(1+\log P)P_{i}^2}{(P\log
P)^2}+\dsum_sN u_su_{sii}+N u_{ii}^2.
\end{equation}
Similar to the calculation \eqref{e2.11} and \eqref{e2.13}, we have, 
\begin{eqnarray}\label{e5}
&&\sigma_{n-1}^{ii}\phi_{ii}\\
&=&\frac{1}{P\log
P}[\sum_le^{\kappa_l}\sigma_{n-1}^{ii}h_{ii,ll}-(n-1)f\sum_le^{\kappa_l}h_{ll}^2+\sigma_{n-1}^{ii}h_{ii}^2\sum_le^{\kappa_l}h_{ll}
\nonumber\\
&&+\sum_l\sigma_{n-1}^{ii}e^{\kappa_l}h_{lli}^2+\sum_{\alpha\neq \beta}\sigma_{n-1}^{ii}\frac{e^{\kappa_{\alpha}}-e^{\kappa_{\beta}}}{\kappa_{\alpha}-\kappa_{\beta}}h_{\alpha\beta i}^2-(\frac{1}{P}+\frac{1}{P\log P})\sigma_{n-1}^{ii}P_i^2]\nonumber\\
&&+\dsum_sN
u_s\sigma_{n-1}^{ii}u_{sii}+\sigma_{n-1}^{ii}N u_{ii}^2. \nonumber
\end{eqnarray}

\par
At $x_0$, differentiating  equation (\ref{1.66}) twice, we have,
\begin{equation}\label{e6}
\sigma_{n-1}^{ii}h_{iij}=f_j+f_uu_j+f_{p_j}u_{jj},
\end{equation}
and
\begin{align}
\sigma_{n-1}^{ii}h_{iijj}+\sigma_{n-1}^{pq,rs}h_{pqj}h_{rsj} \geq
-C-Cu_{jj}^2+\dsum_sf_{p_s}u_{sjj}.      \label{7}
\end{align}
Inserting  (\ref{7}) into (\ref{e5}),
 we have
\begin{eqnarray}
&&\sigma_{n-1}^{ii}\phi_{ii}\label{e18}\\
&\geq &\frac{1}{P\log P}[\sum_le^{\kappa_l}(K(\sigma_{n-1})_l^2-\dsum_{p\neq q}\sigma_{n-1}^{pp,qq}h_{ppl}h_{qql}+\dsum_{p\neq q}\sigma_{n-1}^{pp,qq}h_{pql}^2)\nonumber\\
&&+\sum_l\sigma_{n-1}^{ii}e^{\kappa_l}h_{lli}^2+\sum_{\alpha\neq
\beta}\sigma_{n-1}^{ii}\frac{e^{\kappa_{\alpha}}-e^{\kappa_{\beta}}}{\kappa_{\alpha}-\kappa_{\beta}}h_{\alpha\beta
i}^2-\frac{1+\log P}{P\log
P}\sigma_{n-1}^{ii}P_i^2\nonumber\\
&&-CP-CKPh_{11}^2]+N\sigma_{n-1}^{ii}u_{ii}^2\nonumber\\
&\geq& \frac{1}{P\log P}\dsum_i(A_i+B_i+C_i+D_i-E_i)+N\sigma_{n-1}^{ii}h_{ii}^2(1-|Du|^2)\nonumber\\
&&-\frac{C+CK \kappa_1^2}{\log P}\nonumber.
\end{eqnarray}
Here, the definition of $A_i,B_i,C_i,D_i,E_i$ is same meaning as the previous section. Thus, since $\theta$ is a constant smaller than $1$, we obtain the uniformly bound of $h_{11}$.  The openness is standard. Using the continuity method and Evans-Krylov theory, we obtain our theorem.

\bigskip

\noindent {\it Acknowledgement:} The authors wish to thank  Professor Pengfei Guan for his valuable suggestions and comments. They also
 thank  the Shanghai Centre for Mathematical Sciences for their partial support.  The first author would  like to thank Fudan University for their support and hospitality.

\end{document}